\documentclass[10pt,a4paper]{amsart}

\usepackage{latexsym,amssymb,amsmath,amsthm,amsfonts,enumerate,verbatim,xspace,
exscale}
\usepackage{graphicx}
\usepackage{color,amsbsy,textcomp}
\usepackage{enumerate}
\usepackage{float} 

\usepackage{subcaption}

\usepackage{orcidlink} 

\usepackage{hyperref}
\hypersetup{
    colorlinks=true,
    linkcolor=blue,
    filecolor=magenta,      
    urlcolor=cyan,
}

\input xy 
\xyoption{all} 
\CompileMatrices
\UseComputerModernTips

\theoremstyle{plain}
\newtheorem{theorem}{Theorem}[section]

\newtheorem{proposition}[theorem]{Proposition}
\newtheorem{corollary}[theorem]{Corollary}

\theoremstyle{definition}

\newtheorem{remark}[theorem]{Remark}

\theoremstyle{definition}

\usepackage[
backend=biber,style=alphabetic, natbib=false, mcite=false, casechanger=auto,
	sorting=nyt, sortcites=false, pluralothers=true, maxnames=4, minnames=1,
	backref=false, arxiv=abs, 
	date=year,
	isbn=false, url=false, doi=true, eprint=true, related=false,
	giveninits=true
]{biblatex}
\renewbibmacro{in:}{}	

\def\C{\mathcal{C}}
\def\R{\mathbb{R}}

\def\N{\mathbb{N}}

\newcommand{\up}{\upshape}

\def\vv<#1>{\langle#1\rangle}

\newcommand{\pr}{\mbox{$\text{\up{pr}}$}}

\providecommand{\det}{\mbox{$\text{\up{det}}\,$}}

\providecommand{\vol}{\mbox{$\text{\up{vol}}$}}

\providecommand{\del}{\partial}

\newcommand{\eps}{\varepsilon}

\newcommand{\U}{\mbox{$\mathcal{U}$}}

\newcommand{\VaR}{\mbox{$\textup{VaR}$}}
\newcommand{\EV}{\mbox{$\textup{E}$}}
\newcommand{\var}{\mbox{$\textup{Var}$}}
\newcommand{\cov}{\mbox{$\textup{Cov}$}}

\newcommand{\todo}[1]{\phantom{u}\vspace{5 mm}\par \noindent
\marginpar{\textsc{ToDo}} \framebox{\begin{minipage}[c]{0.95
\textwidth}\raggedright \tt #1 \end{minipage}}\vspace{5 mm}\par}

\usepackage[normalem]{ulem}
\usepackage{xcolor}

\newcommand\deleteF{\bgroup\markoverwith{\textcolor{blue}{\rule[0.8ex]{2pt}{0.9pt}}}\ULon}
\newcommand\deleteS{\bgroup\markoverwith{\textcolor{green}{\rule[0.8ex]{2pt}{0.9pt}}}\ULon}

\usepackage[foot]{amsaddr}

\title[On the asymptotic shape of quantile surfaces]%
{On the asymptotic shape of quantile surfaces}

\author[Florian Gach, Simon Hochgerner]{Florian Gach\,\orcidlink{0000-0003-2835-8226}${}^1$, Simon Hochgerner\,\orcidlink{0000-0002-3978-3706}${}^{*,1}$ 
}

\address{ ${}^1$ Austrian Financial Market Authority (FMA),  Otto-Wagner Platz 5, A-1090 Vienna;} 
\address{${}^*$ Corresponding author;}

\email{Florian.Gach@fma.gv.at}
\email{Simon.Hochgerner@fma.gv.at}

\thanks{\emph{Disclaimer.} 
The opinions expressed in this article are those of the authors and do not necessarily reflect the official position of the Austrian Financial Market Authority. } 
\date{September 29, 2026}
\keywords{}

\begin{document}

\begin{abstract}
This article is concerned with the asymptotic shape of quantile surfaces, defined as the set of quantiles at a given level $\alpha$ generated by a controlled one-dimensional distribution.
Specifically, when the distribution arises as a linear combination of log-normal random variables and the control is a vector of positive coefficients, we prove that quantile surfaces are globally concave in the left tail ($\alpha\to0$) and globally convex in the right tail ($\alpha\to1$). Moreover, these surfaces exhibit asymptotic separation of scale and shape.
\end{abstract}

\maketitle

\tableofcontents

\section*{Introduction}
\subsection*{Motivation}
In the banking, insurance and general finance industry Value-at-Risk ($\VaR$) is the predominantly used risk measure, in many jurisdictions its use is also a regulatory requirement. The popularity of $\VaR$ is due to its simplicity -- its interpretation can be understood immediately at an intuitive level. 

On the other hand, $\VaR$ is often criticized for the following reasons:
\begin{itemize}
\item[(A)]
it does not always reward risk diversification (lack of sub-additivity);
\item[(B)]
portfolio optimization with respect to $\VaR$ may suffer from multiple local extrema (lack of convexity); 
\item[(C)] 
value at risk at confidence level $\alpha$ does not provide any information on the potential loss beyond $\alpha$; 
\end{itemize}
Item (B) is a consequence of (A). 

The motivation for the present article was to understand how non-convexity arises, and ideally to identify regions where convexity can be guaranteed. Concretely, we are interested in a linear portfolio with pay-off $h(u, X) = \sum_{i=1}^n u_i X_i$ where $X_i$ are risk factors and $u_i>0$ are investment choices. For elliptical $X_i$ it is known that the quantile at level $\alpha$ of $h(u,X)$, i.e.\  
$q_{\alpha}(u) = [h(u,X)]_{\alpha} = \VaR_{1-\alpha}[-h(u,X)]$,
is concave in $u$ for small $\alpha$ and convex in $u$ for sufficiently large $\alpha$. On the other hand, for log-normally distributed $X_i$ it is unknown how convexity might arise or break down. Given the practical relevance of log-normal risk factor modeling in financial applications, it is our goal to understand convexity properties of $q_{\alpha}(u)$ for log-normal $X_i$.  

\subsection*{Description of results}
It turns out that this problem is intimately tied to the geometry of hypersurfaces moving through the tails of the pay-off distribution.  In this regard, our contributions are as follows: 
\begin{enumerate}
\item 
For nonlinear $h$ we provide a formula for the gradient and the Hessian of $q_{\alpha}$ with respect to $u$ (Theorem~\ref{thm:q-der}).   
\item 
For linear $h$ and log-normal $X_1,\dots,X_n$ we show that  $q_{\alpha}(u)$ is concave in $u$ for small $\alpha$ and convex in $u$ for large $\alpha$. 
More precisely, we show that for all $u$ with $u_i>0$ there exists a level $\alpha_0$ such that $q_{\alpha}$ is strictly concave for all $\alpha\le\alpha_0$, when there is not too much positive correlation between log-factors, $\log(X_i)$. On the other hand, when correlation between log-factors is high, we simplify the presentation by restricting to $n=2$ and show that concavity still holds in the small $\alpha$-regime (Theorem~\ref{thm:conc}). Similarly, for high $\alpha$ we also restrict to $n=2$ and show that convexity holds (Theorem~\ref{thm:conv}).     
\item 
In the deep tail regime, for linear pay-off and log-normal risk factors, we find that $q_{\alpha}(u)$ develops a universal structure, where scale and shape are separated and the location parameter of the log-factors is forgotten: as $\alpha\to0$ and as $\alpha\to1$ we explicitly identify (respectively) level functions, $c = c(\alpha)$, and shape  functions, $s=s(u)$, such that $q_{\alpha}(u)\sim c(\alpha)s(u)$. See Section~\ref{sec:univ}.
\end{enumerate}

When the risk factors possess a density which is defined on $\R^n$ then the Hessian formula in Theorem~\ref{thm:q-der} can be decomposed into expectations of: the Hessian of $h$ (pointwise in probability space), transport of quadratic deviation from conditional mean, transport of conditional density,  and a mean curvature term (cf.\ equation~\eqref{e:Hess_q_2}). This is of independent interest, since it shows the possible effects that can occur for nonlinear $h$. For linear $h$ only the transport of the conditional surface density survives, and the rest of the paper is devoted to this term. 

First, in Section~\ref{sec:ell_X}, we consider the case of elliptical risk factors and rederive known convexity properties. 

Section~\ref{sec:log_X} then focuses on $h(u,X) = \sum_{i=1}^n u_i X_i$ with log-normal $X_i$ and $u_i>0$. The approach to analyzing the Hessian formula consists of three steps: we realize the conditional variance derived in Theorem~\ref{thm:q-der} as a surface integral over a moving hyperplane; then we express this integral with respect to a moving frame of reference so that it becomes an integral over a fixed hyperplane; for the $n=2$ cases, we use asymptotic analysis to evaluate the leading terms of this integral as $\alpha\to0$ and as $\alpha\to1$. The main technical point in this process is to describe the (moving frame pull-back of the) conditional measure, $\mu_y$, as a function of $y=q_{\alpha}(u)$ at fixed $u$. It turns out that the measure flow, $y\mapsto\mu_y$, has surprisingly rich structure, and it is this redistribution of mass that is responsible for the phase transition from concavity in the left tail to convexity in the right tail.  See Theorems~\ref{thm:conc} and \ref{thm:conv}. 

Regarding criticisms (A) and (B) we thus arrive at the refinement that quantiles of linear portfolios with log-normal risk factors are concave/convex in the tails, whence sub-additivity follows and portfolio optimization (in the sense of minimizing $\VaR$ in the loss tail) becomes amenable. 

Moreover, the universality structure in Section~\ref{sec:univ} shows that quantile surfaces in the tail tend to organize in a scale invariant way. Thus optimizing $\VaR_{\alpha_0}$ for an $\alpha_0$ (far enough) in the tail is also (nearly) optimal for all other $\alpha$ around and beyond $\alpha_0$. Since optimizers are -by definition- stable with respect to first order perturbations, this yields, for all practical purposes, optimization in the full tail. Criticism (C) remains valid in the sense that loss potential beyond $\alpha_0$ cannot be quantified by $\VaR_{\alpha_0}$, but (nearly) optimal investment can be specified simultaneously for the full tail. 

Finally, we note that in the generic case (not very strong positive correlation between $\log(X_i)$), the shape function $s(u)$ is strictly concave as $\alpha\to0$, whence the penalty of not being at the optimal allocation can be  estimated from a non-zero curvature, which is also explicitly given via the asymptotic shape structure.   

Section~\ref{sec:OptVaR} contains some comments on the optimization of Value-at-Risk. However, optimization itself is not in the scope of this paper and a more thorough study will appear in a forthcoming paper.  

Regarding potential generalizations to other distributions, it is noteworthy that our asymptotic analysis makes heavy use of the specific form of the log-normal density. On the other hand, the pull-back approach to evaluating the Hessian \eqref{e:Hess_q} may also work for more general quasi-convex distributions.

\subsection*{Related literature}
Criticism of $\VaR$ has been made rigorous from an axiomatic perspective in the highly influential article \cite{Artzner_etal_99} where also the concept of coherent risk measures was introduced. Optimization of Conditional Value-at-Risk, which is a coherent risk measure, was studied in \cite{RockafellarUryasev_1998} with the motivation of guaranteeing convexity properties. 

Positive aspects of $\VaR$ include its suitability for robust forecasting and backtesting (\cite{Gneiting_2011, Ziegel_2016}). 

The formulas (in Theorem~\ref{thm:q-der}) for the gradient and Hessian of $q_{\alpha}(u)$ for linear $h$ were originally derived in \cite{Gourieroux_etal_2000}, where these results were used for sensitivity analysis. 

In \cite{Gulisashvili_Tankov_2016} the authors study the tail behavior of linear combinations of log-normal random variables and provide an explicit characterization of the tail asymptotics of the density and the distribution functions. Particularly, for the left tail they find that the asymptotics depends on the correlation structure of the Gaussian log-factors, this observation is also true for our concavity result (Theorem~\ref{thm:conc}).
Moreover, they give an asymptotic formula for $\alpha\mapsto q_{\alpha}(u)$ (formulated as a Value-at-Risk expansion \cite[Theorem~6]{Gulisashvili_Tankov_2016}) as $\alpha\to0$ and at fixed $u$, but they do not study convexity properties of $q_{\alpha}(u)$ as a function of $u$. In this sense \cite{Gulisashvili_Tankov_2016} are concerned with the asymptotic location of the quantile while we are concerned with its asymptotic shape.

The asymptotic tail behavior of sums of lognormal risk factors was further studied by \cite{Embrechts_Hashorva_Mikosch_2014}, where the authors also compared asymptotic Value-at-Risk and asymptotic Expected Shortfall (Conditional $VaR$).

\section{The controlled quantile function}\label{sec:QF} 
Consider a smooth function $h: \mathcal{U}\times\mathcal{C} \to\R$, $(u,x)\mapsto h(u,x)$ where $\mathcal{U}\times\mathcal{C}\subset\R^d\times\R^n$ is an open and connected subset. Consider further a random variable $X$ with values in $\mathcal{C}$, with smooth density $f_X: \mathcal{C}\to\R$ and distribution function $F_X$. 

For a given level $\alpha\in(0,1)$, we define the controlled quantile function associated to $Y = h(u, X)$, with  density $f_Y = f_Y(y,u)$ and distribution function $F_Y = F_Y(y,u)$,  as
\begin{equation}
 q_{\alpha}(u)
 = 
 \textup{inf}\,\{
 y\in\R: F_Y(y, u)\ge \alpha 
 \}
\end{equation}
We are interested in the convexity properties of this function with respect to $u$ at fixed $\alpha$.  

For a function $\xi: \mathcal{C}\to\R$ we will abbreviate the conditional expectation as $\EV_{(u,y)}[\xi] := \EV[\xi(X)| h(u,X)=y]$. For  vector valued functions $\xi, \zeta: \mathcal{C}\to\R^k$ we denote conditional covariance by 
$\cov_{(u,y)}[\xi, \zeta] = \EV_{(u,y)}[\xi\otimes\zeta] - \EV_{(u,y)}[\xi]\otimes \EV_{(u,y)}[\zeta]$ where $\xi\otimes\zeta = \xi \cdot \zeta^{\top}$ is the tensor product of vectors, and the variance is accordingly $\var_{(u,y)}[\xi] = \cov_{(u,y)}[\xi,\xi]$. Moreover, it will be convenient to abbreviate the tensor product as $\xi^2 = \xi\otimes\xi$.  

Let $\pr_1: \U\times\C\to\U$ be the projection onto the first factor and consider the mapping $\pi = (\pr_1, h): \U\times\C \to \U\times\R$, $(u,x)\mapsto(u,h(u,x))$. When $(u, y)$ is a regular value of $\pi$ then the preimage $S_{(u,y)} = \pi^{-1}(u,y)$ is a submanifold in $\U\times\C$. In this case, the conditional expectation may be expressed through the co-area formula, that is
\begin{equation}   
\label{e:cond}
 \EV_{(u,y)}[\xi] 
 = f_Y(y,u)^{-1}
 \int_{S_{(u,y)}}\xi(x)\frac{f_X(x)}{|\nabla h(u,x)|}\,d\sigma_{(u,y)}(x)
\end{equation}
for a smooth function $\xi: \C\to\R$ and where $d\sigma_{(u,y)}$ is the induced volume on $S_{(u,y)}$ from the Euclidean volume. 
With $\xi = 1$ we get a formula for $f_Y$, 
\begin{equation}
    f_Y(y,u) = \int_{S_{(u,y)}} \frac{f_X(x)}{|\nabla h(u,x)|}\,d\sigma_{(u,y)}(x)
\end{equation}
Since the set of regular values is open, this formula allows to parametrize $f_Y$ in a neighborhood of $(u,y)$. When we assume additionally that $\EV[D^{(k)}h(u,X)]<\infty$ for all $u\in\U$, where $D^{(k)}$ is any constant partial differential operator on $\U\times\C$ and $k\in\N\cup\{0\}$, then it follows that $f_Y$ is smooth in $(y,u)$.  

In the following we use $\nabla$ for differentiation with respect to $x$, while differentiation with respect to $u$ will be given by variational notation, e.g.\ $\delta h/\delta u$. 

For linear $h$ the following result is due to \cite{Gourieroux_etal_2000}.

\begin{theorem}[Quantile derivatives]\label{thm:q-der}
Assume that $\EV[(D^{(k)}h(u,X))^m]<\infty$ for all $u\in\U$, all $k\in\N\cup\{0\}$ and $m=1,2$. Let $\alpha\in(0,1)$ and assume that $(u,q_{\alpha}(u))$ is a regular value of $\pi$. 
Then the gradient is given by
\begin{equation}
\label{e:grad}
 \frac{\delta q_{\alpha}}{\delta u}
 = \EV_{q_{\alpha}(u)}\Big[\frac{\delta h}{\delta u}\Big]
\end{equation}
and the Hessian is given by
\begin{equation}
\label{e:Hess_q}
 \frac{\delta^2 q_{\alpha}}{\delta u^2}
 = 
 \EV_{q_{\alpha}(u)}\Big[\frac{\delta^2 h}{\delta u^2} \Big]
 - 
 f_Y(q_{\alpha}(u),u)^{-1}\,
 \frac{\del}{\del y}\Big|_{y = q_{\alpha}(u)}\,
 \Big(
 f_Y(y,u)\var_y\Big[
  \frac{\delta h}{\delta u}
 \Big] 
 \Big)
\end{equation}
\end{theorem}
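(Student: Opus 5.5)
The plan is to differentiate the defining identity $F_Y(q_\alpha(u),u)=\alpha$ implicitly, and to express each $u$-derivative of $F_Y$ as a conditional expectation.

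\emph{Setup.} Since $(u,q_\alpha(u))$ is a regular value of $\pi$, the density $f_Y$ is smooth and positive near $(q_\alpha(u),u)$. Hence $F_Y$ is strictly increasing in $y$ there, and $q_\alpha$ is characterized locally by
\[
F_Y(q_\alpha(u),u)=\alpha .
\]
By the implicit function theorem $q_\alpha$ is smooth near $u$, and
\[
\frac{\delta q_\alpha}{\delta u}=-\frac{\delta F_Y/\delta u}{f_Y}.
\]

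\emph{Key lemma.} For a test function $\xi$, set $G_\xi(y,u):=\EV\big[\xi(u,X)\,\mathbf 1_{\{h(u,X)\le y\}}\big]$. I claim
\[
\frac{\delta G_\xi}{\delta u}=\EV\Big[\frac{\delta\xi}{\delta u}\mathbf 1_{\{h\le y\}}\Big]-\frac{\partial}{\partial y}\EV\Big[\xi\,\frac{\delta h}{\delta u}\,\mathbf 1_{\{h\le y\}}\Big].
\]
To prove it, I replace the indicator by a smooth approximation $\phi_\epsilon(y-h)$ and differentiate under the expectation; the moment hypotheses $\EV[(D^{(k)}h)^m]<\infty$ justify this by dominated convergence. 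The result is
\[
\EV\Big[\frac{\delta\xi}{\delta u}\phi_\epsilon\Big]-\EV\Big[\xi\,\frac{\delta h}{\delta u}\,\phi_\epsilon'(y-h)\Big],
\]
and $\phi'_\epsilon(y-h)$ is $\partial_y$ of $\phi_\epsilon(y-h)$. Passing to $\epsilon\to0$ uses the co-area representation \eqref{e:cond}: the function $y\mapsto \EV[\zeta\mathbf 1_{\{h\le y\}}]=\int_{-\infty}^y f_Y\EV_{(u,s)}[\zeta]\,ds$ is $C^1$ near regular values, with derivative $f_Y(y,u)\EV_{(u,y)}[\zeta]$.

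\emph{Gradient.} Applying the lemma with $\xi=1$ gives
\[
\frac{\delta F_Y}{\delta u}=-f_Y\,\EV_y\Big[\frac{\delta h}{\delta u}\Big],
\]
which yields \eqref{e:grad}.

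\emph{Hessian.} I would write the gradient in the form
\[
f_Y(y,u)\,\EV_y\Big[\frac{\delta h}{\delta u}\Big]=\frac{\partial}{\partial y}G_{\delta h/\delta u}(y,u),
\]
and differentiate the identity $\delta q_\alpha/\delta u=\Psi(q_\alpha(u),u)$, where $\Psi=(\partial_yG)/f_Y$. The chain rule gives
\[
\frac{\delta^2 q_\alpha}{\delta u^2}=\partial_y\Psi\otimes\frac{\delta q_\alpha}{\delta u}+\frac{\delta\Psi}{\delta u}.
\]
The $u$-derivatives of $\partial_yG$ and of $f_Y=\partial_yF_Y$ come from the lemma by commuting $\partial_y$ with $\delta/\delta u$. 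This produces $\partial_y\big(f_Y\EV_y[\delta^2h/\delta u^2]\big)$-type terms minus $\partial_y^2\big(f_Y\EV_y[(\delta h/\delta u)^2]\big)$, together with $\partial_y^2\big(f_Y\EV_y[\delta h/\delta u]\big)$ for $f_Y$ itself.

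Careful bookkeeping is then needed. Let $m(y)=\EV_y[\delta h/\delta u]$, $M(y)=f_Y\EV_y[(\delta h/\delta u)^2]$ and $N(y)=f_Y m$, with $N'=\partial_yN$. Substituting $\delta q_\alpha/\delta u=m$ should collapse everything to
\[
\EV_y\Big[\frac{\delta^2h}{\delta u^2}\Big]-f_Y^{-1}\,\partial_y\big(M-f_Y m^{2}\big),
\]
which is exactly \eqref{e:Hess_q}. In this step the cross terms $m\otimes N'$ and $N'\otimes m$ must combine with $\partial_y(f_Ym^2)=\partial_y(N\otimes m)$.

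I expect the main obstacle to be the rigorous justification of interchanging $\delta/\delta u$, $\partial_y$ and the limit $\epsilon\to0$, i.e.\ the $C^2$-smoothness of $(y,u)\mapsto f_Y\EV_y[\zeta]$ near regular values. I would handle this through local trivializations of the fibration $\pi$ near $(u,q_\alpha(u))$, using the moment assumptions for domination. The algebraic cancellation is routine but must be tracked symmetrically in the tensor indices.
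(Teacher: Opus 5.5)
Your proposal is correct and is essentially the paper's proof. The paper also mollifies the Heaviside function to obtain $\delta F_Y/\delta u=-f_Y\EV_y[\delta h/\delta u]$ and $\delta^2F_Y/\delta u^2=\partial_y\big(f_Y\EV_y[(\delta h/\delta u)^2]\big)-f_Y\EV_y[\delta^2h/\delta u^2]$; the latter is exactly $-\delta N/\delta u$ from your lemma with $\xi=\delta h/\delta u$. It then differentiates $f_Y\,\delta q_\alpha/\delta u+\delta F_Y/\delta u=0$ a second time, which is the same computation as your quotient rule on $\Psi=N/f_Y$.

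One slip to fix: in the sentence just before your bookkeeping, each listed term carries one $\partial_y$ too many, because you wrote $f_Y\EV_y[\zeta]$ where you meant $G_\zeta$. The correct identities are $\delta N/\delta u=f_Y\EV_y[\delta^2h/\delta u^2]-\partial_y\big(f_Y\EV_y[(\delta h/\delta u)^2]\big)$ and $\delta f_Y/\delta u=-N'$, and these are what your $M',N'$ bookkeeping actually uses.
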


\begin{proof}
Ad \eqref{e:grad}. 
Since $f_Y$ is smooth, so is the distribution function $F_Y$. Therefore, smoothness of $q_{\alpha}$ follows from the implicit function theorem, and moreover differentiating $F_Y(q_{\alpha}(u),u) = \alpha$ with respect to $u$ yields
\begin{equation}
\label{e:impl_der}
 f_Y\frac{\delta q_{\alpha}}{\delta u} + \frac{\delta F_Y}{\delta u} 
 = 0
\end{equation}
In order to compute $\delta F_Y / \delta u$ we observe first that
\begin{equation} \label{e:Heaviside}
    F_Y(y, u) = \EV[ H(y - h(u, X)) ],
\end{equation}
where $H$ is the Heaviside function given by $H(t) = 1$ for $t > 0$ and $H(t) = 0$ otherwise. 
To differentiate this expression, we fix a mollifier $\rho\ge 0$. That is,  $\rho\in C^{\infty}(\R)$ with compact support $[0, 1]$, and $\int_{\mathbb{R}} \rho = 1$. 
Let $\rho_{\varepsilon}(t) = \frac{1}{\varepsilon} \rho(\frac{t}{\varepsilon})$ and set
\begin{equation}
    H_{\varepsilon}(t)
    := \int_{-\infty}^t \rho_{\varepsilon}(s) \,ds
\end{equation}
Define the mollified distribution function by
\begin{equation}
    F_Y^{\varepsilon}(y, u) 
    = \EV\Big[ H_{\varepsilon}(y - h(u,X)) \Big]
    = \Big(\rho_{\eps}\ast F_Y(.,u) \Big) (y) .
\end{equation}
Note that $F_Y^{\varepsilon}\to F_Y$ in $C^k_{\textup{loc}}(\R\times\U)$ for all $k\in\N\cup\{0\}$ as $\varepsilon\to 0$. Now, 
\begin{align}
\notag 
 \frac{\delta F_Y^{\eps}}{\delta u}(y,u)
 &= 
 -\EV\Big[
   H_{\eps}'\Big( y-h(u,X) \Big)\frac{\delta h}{\delta u}(u, X)
 \Big]
 \\
 \label{equ2}
 &= 
 -\EV\Big[ 
  \rho_{\eps}\Big(
   y - h(u,X)
  \Big) \frac{\delta h}{\delta u}(u, X)
 \Big] \\
 \notag 
 &= 
 -\EV\Big[ 
  \rho_{\eps}\Big(
   y - h(u,X)
  \Big) 
  \EV\Big[ \frac{\delta h}{\delta u}(u, .)\Big| h(u,X) \Big]
 \Big]
 \\ 
 \notag 
 &=
 -
 \int_{\mathbb{R}} 
 \rho_{\eps}\Big(
   y - z
  \Big) 
  \EV_{(u,z)}\Big[ \frac{\delta h}{\delta u}(u, .)\Big]
  f_Y(z,u)\,dz
  \\
  \notag 
  &= 
  \Big(
  \rho_{\eps} \ast
  \EV_{(u,.)}\Big[ \frac{\delta h}{\delta u}\Big]
  f_Y(.,u)
  \Big)(y)
\end{align}
implies that 
\begin{equation}
 \label{e:magic1}
 \frac{\delta F_Y}{\delta u}(y,u) 
 = -f_Y(y,u) \, \EV_{(u,y)}\Big[ \frac{\delta h}{\delta u}\Big]
\end{equation}
whence \eqref{e:impl_der} yields the gradient formula \eqref{e:grad}.

Ad \eqref{e:Hess_q}. 
Let $u_i$ be the $i$-th component of $u$. Differentiating \eqref{e:impl_der} again yields, componentwise,  
\begin{equation}
\label{e:impl_der2}
 \Big(
  \frac{\del f_Y}{\del y}\frac{\delta q_{\alpha}}{\delta u_j}
  +
  \frac{\delta f_Y}{\delta u_j}
 \Big)\frac{\delta q_{\alpha}}{\delta u_i}
 + f_Y \frac{\delta^2 q_{\alpha}}{\delta u_j \delta u_i}
 + \frac{\delta f_Y}{\delta u_i}\frac{\delta q_{\alpha}}{\delta u_j}
 + \frac{\delta^2 F_Y}{\delta u_j \delta u_i}
 = 0.
\end{equation}
To evaluate the last term we differentiate again the mollified version \eqref{equ2}: 
\begin{align}
 \frac{\delta^2 F_Y^{\eps}}{\delta u_j \delta u_i}
 &= 
 -\EV\Big[
  -\rho_{\eps}'\Big(
   y - h(u,X)
  \Big)\frac{\delta h}{\delta u_j}\frac{\delta h}{\delta u_i} 
  +
  \rho_{\eps}\Big(
   y - h(u,X)
  \Big)\frac{\delta^2 h}{\delta u_j \delta u_i}
  \Big]  
  \\
  \notag 
  &= 
  \EV\Big[
  \rho_{\eps}'\Big(
   y - h(u,X)
  \Big)
  \EV\Big[ \frac{\delta h}{\delta u_j}\frac{\delta h}{\delta u_i}
    \Big| h(u,X) \Big]
  \Big]
  -
  \EV\Big[
  \rho_{\eps}\Big(
   y - h(u,X)
  \Big)
  \EV\Big[ \frac{\delta^2 h}{\delta u_j \delta u_i} 
    \Big| h(u,X) \Big]
  \Big]  
  \\
  \notag 
  &= 
  \Big( 
   \rho_{\eps}'\ast  
   \EV_{(u,.)}\Big[ \frac{\delta h}{\delta u_j}\frac{\delta h}{\delta u_i}
    \Big] f_Y(.,u)
  \Big) (y)
  -
  \Big(
   \rho_{\eps} \ast  
   \EV_{(u,.)}\Big[ \frac{\delta^2 h}{\delta u_j \delta u_i}
    \Big] f_Y(.,u)
  \Big) (y)
\end{align}
and convergence in $C^k_{\textup{loc}}(\R\times\U)$ implies
\begin{equation}
 \label{e:magic2}
 \frac{\delta^2 F_Y}{\delta u_j \delta u_i}
 = 
 \Big(\del_y f_Y\Big) \, 
 \EV_{(u,y)}\Big[ 
    \frac{\delta h}{\delta u_j}\frac{\delta h}{\delta u_i}
 \Big]
 + 
 f_Y\,
 \del_y 
 \EV_{(u,y)}\Big[ 
    \frac{\delta h}{\delta u_j}\frac{\delta h}{\delta u_i}
 \Big]
 - f_Y\,  
   \EV_{(u,y)}\Big[ \frac{\delta^2 h}{\delta u_j \delta u_i}
    \Big]
\end{equation}
Now we note that 
$ 
\del_y \cov_{(u,y)}[ 
  \frac{\delta h}{\delta u_j}, \frac{\delta h}{\delta u_i}
 ] 
 = 
  \del_y \EV_{(u,y)}[ 
  \frac{\delta h}{\delta u_j} \frac{\delta h}{\delta u_i} ] 
 -
  \del_y 
  (\EV_{(u,y)}[ \frac{\delta h}{\delta u_j} ]
  \EV_{(u,y)}[ \frac{\delta h}{\delta u_i} ] 
 )
$.
From \eqref{e:impl_der2} with \eqref{e:grad}, \eqref{e:magic1} and \eqref{e:magic2} we thus get
\begin{align} 
 -f_Y\frac{\delta^2 q_{\alpha}}{\delta u_j \delta u_i}
 &= 
 \Big( 
  (\del_y f_Y) \EV_{(u,y)}\Big[ \frac{\delta h}{\delta u_j} \Big]
  + 
  \frac{\delta f_Y}{\delta u_j}
 \Big) \EV_{(u,y)}\Big[ \frac{\delta h}{\delta u_i} \Big] 
 + 
 \frac{\delta f_Y}{\delta u_i}\EV_{(u,y)}\Big[ \frac{\delta h}{\delta u_j} \Big]
 \\
 \notag 
 &\phantom{==}
 +
 (\del_y f_Y)\Big( 
  \cov_{(u,y)}\Big[\frac{\delta h}{\delta u_j}, \frac{\delta h}{\delta u_i} \Big]
  + 
  \EV_{(u,y)}\Big[ \frac{\delta h}{\delta u_j} \Big] 
  \EV_{(u,y)}\Big[ \frac{\delta h}{\delta u_i} \Big] 
 \Big)
  \\
 \notag 
 &\phantom{==}
 +
 f_Y\Big( 
  \del_y\cov_{(u,y)}\Big[\frac{\delta h}{\delta u_j}, \frac{\delta h}{\delta u_i} \Big]
  + 
  \del_y\Big( \EV_{(u,y)}\Big[ \frac{\delta h}{\delta u_j} \Big] 
  \EV_{(u,y)}\Big[ \frac{\delta h}{\delta u_i} \Big] 
  \Big)
 \Big)
   \\
 \notag 
 &\phantom{==}
 -
 f_Y\,
   \EV_{(u,y)}\Big[ \frac{\delta^2 h}{\delta u_j \delta u_i} \Big]  
 \\
 \notag 
 &=
  (\del_y f_Y) \,
  \cov_{(u,y)}\Big[\frac{\delta h}{\delta u_j}, \frac{\delta h}{\delta u_i} \Big]
  + 
  f_Y \,
  \del_y\cov_{(u,y)}\Big[\frac{\delta h}{\delta u_j}, \frac{\delta h}{\delta u_i} \Big]
   -
 f_Y\,
   \EV_{(u,y)}\Big[ \frac{\delta^2 h}{\delta u_j \delta u_i} \Big]  
\end{align}
as desired. 
\end{proof}

We will now drop $u$ from the notation and simply write $S_y$, $\EV_y$ etc.\ with the implicit understanding that $u$-dependency is preserved.  

\subsection{Surface variation along normal vector}\label{sec:surf_var}
Assume that $\mathcal{C} = \R^n$. Then we can vary the surface $S_{q_{\alpha}(u)}$ infinitesimally along its normal direction $\nabla h$, and the surface variation formula \eqref{e:surf_var} yields 
\begin{align}
 \frac{\del}{\del y}\Big|_{y = q_{\alpha}(u)}\,
 \Big(
 f_Y(y,u)\var_y\Big[
  \frac{\delta h}{\delta u}
 \Big] \Big) 
 &= 
 \frac{\del}{\del y}\Big|_{y = q_{\alpha}(u)}\,\int_{S_y}
 \Big(\frac{\delta h}{\delta u}-\EV_y\Big[\frac{\delta h}{\delta u}\Big]\Big)^2\,\frac{f_X}{|\nabla h|}\,d\sigma_y
 \\
 \notag 
 &= 
 \int_{S_y}
 T\Big[\Big(\frac{\delta h}{\delta u}-\EV_y\Big[\frac{\delta h}{\delta u}\Big]\Big)^2\,\frac{f_X}{|\nabla h|}\Big]\,d\sigma_y\, \Big|_{y = q_{\alpha}(u)}
 \\ 
 \notag 
 &=
 f_Y(q_{\alpha}(u),u)\,
 \EV_{q_{\alpha}(u)}\Big[
  f_X^{-1}\,\textup{div}\,\Big(
   \Big(\frac{\delta h}{\delta u}-\EV_{q_{\alpha}(u)}\Big[\frac{\delta h}{\delta u}\Big]\Big)^2
   \frac{f_X}{|\nabla h|}\nu 
  \Big)
 \Big]
\end{align}
where $\nu = (\nabla h)/|\nabla h|$ and 
we agree that $\gamma^2 = \gamma\otimes\gamma$ for $\gamma = \delta h/\delta u \in \R^d$.

Hence the Hessian \eqref{e:Hess_q} can be expressed as 
\begin{align}
 \label{e:Hess_q_2}
 \frac{\delta^2 q_{\alpha}}{\delta u^2}
 = 
 \EV_{q_{\alpha}(u)}\Big[&
  \frac{\delta^2 h}{\delta u^2} 
  - 
  f_X^{-1}\,\textup{div}\,\Big(
  \Big(\frac{\delta h}{\delta u}-\EV_{q_{\alpha}(u)}\Big[\frac{\delta h}{\delta u}\Big]\Big)^2
   \frac{f_X}{|\nabla h|}\nu 
  \Big)
 \Big] \\
 \notag 
 =
 \EV_{q_{\alpha}(u)}\Big[& 
   \frac{\delta^2 h}{\delta u^2} 
  - 
  \Big\langle
   \nabla\Big(
  \Big(\frac{\delta h}{\delta u}-\EV_{q_{\alpha}(u)}\Big[\frac{\delta h}{\delta u}\Big]\Big)^2
   \Big),
   \frac{1}{|\nabla h|}\nu 
  \Big\rangle 
  - 
  f_X^{-1} 
  \Big(\frac{\delta h}{\delta u}-\EV_{q_{\alpha}(u)}\Big[\frac{\delta h}{\delta u}\Big]\Big)^2
  \Big\langle \nabla\frac{f_X}{|\nabla h|}, \nu \Big\rangle \\
  \notag 
  &- 
  \Big(\frac{\delta h}{\delta u}-\EV_{q_{\alpha}(u)}\Big[\frac{\delta h}{\delta u}\Big]\Big)^2
  \frac{n-1}{|\nabla h|}\kappa_{q_{\alpha}(u)} 
  \Big] 
\end{align}
where $\kappa_{q_{\alpha}(u)}$ is the mean curvature ~\eqref{e:mean_curv} on $S_{q_{\alpha}(u)}$. The Hessian thus decomposes into the expected second order contribution $\EV_{q_{\alpha}(u)}[\delta^2 h/(\delta u)^2]$, two transport terms and a mean curvature correction.

\section{Convexity properties for elliptical $X$}\label{sec:ell_X}
Let us assume that $X$ is elliptically distributed in $\mathcal{C} = \R^n$ and suppose that $h: \R^n\times\R^n\to\R$ is of the form $h(u,x) = \vv<u,x>$. 

Since elliptical random variables are affine transformations of spherical variables, we have $X = M P + w$ for a matrix $M$, a vector $w$, and $P$ has density
\begin{equation}
 f_{P}(x) = \frac{1}{Z}\psi\Big( \vv<x, x> \Big) 
\end{equation}
where $\psi>0$, $\psi'(z)<0$ for $z\neq0$ and $Z$ is a normalization constant.

Thus the pay-off function can be represented as 
$h(u,X) 
= h(u, M P + w) 
= \vv<M^{\top}u, P> + \vv<u, w> 
= h(\tilde{u}, P) + \vv<u,w>$ where $\tilde{u} = M^{\top}u$. This yields 
$q_{\alpha}(u) 
= \tilde{q}_{\alpha}(\tilde{u}) + \vv<u,w>$ where $\tilde{q}_{\alpha}(\tilde{u})$ is the $\alpha$-quantile of $h(\tilde{u}, P)$. 
Therefore, convexity of $\tilde{q}_{\alpha}$ at $\tilde{u}$ is equivalent to convexity of $q_{\alpha}$ at $u$. 

Suppose $\tilde{u}\neq 0$. Then $\nabla \tilde{h} = \tilde{u}$, $ \nu = \tilde{u}/|\tilde{u}| $ and equation~\eqref{e:Hess_q_2} reduces to 
\begin{align}
\label{e:hess_ell}
 \frac{\delta^2 \tilde{q}_{\alpha}}{\delta \tilde{u}^2}
 &= 
 -\frac{1}{|\tilde{u}|}\EV_{\tilde{q}_{\alpha}(\tilde{u})}
 \Big[ 
  \Big(P-\EV_{\tilde{q}_{\alpha}(\tilde{u})}[P]\Big)^2\vv<\nu , \nabla\log f_P>
 \Big] \\ 
 \notag 
 &= 
 -\frac{1}{|\tilde{u}|^3}
 f_Y(y,u)^{-1}\int_{S_y}
  \Big(x-\EV_{\tilde{q}_{\alpha}(\tilde{u})}[P]\Big)^2 
  \psi'\Big( \vv<x, x > \Big)
  \Big\langle \tilde{u}, 2 x \Big\rangle 
  \, d\sigma_y(x)
\end{align} 
where $y=\tilde{q}_{\alpha}(\tilde{u})$. Now we parametrize the hypersurface $S_y$ as 
$S_y 
= \{ y\frac{\tilde{u}}{|\tilde{u}|^2} + v: \vv<v,\tilde{u}> = 0 \}$ 
and note that the only term in \eqref{e:hess_ell} which has an indefinite sign thus becomes
\begin{align}
 \langle \tilde{u}, x \rangle 
 &= 
 y.
\end{align}

This implies that $\frac{\delta^2 \tilde{q}_{\alpha}}{\delta \tilde{u}^2}$ is negative definite for $y<0$ and positive definite for $y>0$. 
That is, at $u\neq0$, $q_{\alpha}$ is concave  for $\alpha\to0$ and convex as $\alpha\to1$. 

\begin{remark}
This observation is not new, but a manifestation of the well-known fact that Value-at-Risk is sub-additive in the right tail ($\alpha\to1$) of linear combinations of elliptically distributed risk factors (see \cite[Theorem~6.8]{McNeilFreyEmbrechts2015}).  
\end{remark}

\section{Convexity properties for lognormal $X$}\label{sec:log_X}
Let $h(x,u) = \vv<x,u>$ be linear in $x$ and $u = (u_i)$ with $u_i > 0$ for $i=1,\dots,m+1$. 
Assume that $X$ is multivariate log-normal with density $f_X = \exp(-\phi_X)$, specified by 
\begin{equation}\label{e:log_dens}
 \phi_X(x) 
 = \frac{1}{2}\vv<\log(x)-\mu , A^{-1}(\log(x)-\mu)>
  + \sum_{j=1}^{m+1}\log(x_j) + \frac{m+1}{2}\log(2\pi) + \frac{1}{2}\log\det A
\end{equation}
where $A$ is a symmetric and positive-definite matrix with inverse $B=A^{-1}$, $\mu\in\R^{m+1}$ and $\log(x) = \sum_{j=1}^{m+1}\log(x_j)e_j$ where $e_j$ is the standard basis. That is, $X_i = \exp(Z_i)$ with $Z\sim\mathcal{N}_{m+1}(\mu, A)$.  

This distribution does not fit the set-up of Section~\ref{sec:surf_var} since $f_X  = \exp(-\phi_X)$ is defined only on the cone $\mathcal{C} := \{(x_i > 0\}$, so we cannot apply formula~\eqref{e:hess_ell}.  

Via \eqref{e:cond} the  Hessian formula \eqref{e:Hess_q} is equivalent to an integral over a moving hyperplane. To evaluate this integral, we attach a moving frame of reference to the moving surface and  study the infinitesimal variation of the integrand as seen from the surface. 

The control variable $u$ is kept fixed throughout this section and we will write $S_y(u) = S_y$.

\subsection{Level set geometry in the  body frame of reference}
Consider the scaling diffeomorphism $s_y: S_1\to S_y$, $x\mapsto yx$.  

For a function $\xi: \mathcal{C}\to\R$, the transformation rule yields
\begin{align}
 &\int_{S_y}\xi\exp(-\phi_X)\,\textup{vol}^{S_y} 
 =  \int_{S_1} s_y^*\xi\exp(-s_y^*\phi_X)\,s_y^*\textup{vol}^{S_y} \\ 
 \notag 
 &= \int_{S_1}\xi(yx)\exp\Big(
  -\log(y)^2\beta/2 + \log(y)\sum_j(B\mu)_j - (m+1)\log(y)
  -\log(y)\sum_{j,i}B_{ji}\log(x_i) - \phi_X(x)
 \Big)\,y^m\textup{vol}^{S_1}_x  \\
 \notag
 &= 
 \exp\Big(
  -\frac{\beta}{2}\log(y)^2 + \log(y)(\sum_j(B\mu)_j-1)
 \Big)
 \int_{S_1}\xi(yx)\exp\Big(-w_y(x) \Big)
 \,\textup{vol}^{S_1}_x  
\end{align}
where $\beta = \sum_{i=1}^{m+1}b_i$, $b_i = \sum_{j=1}^{m+1}B_{ji}$ and
\begin{align}
\label{e:def_gy}
 g_y(x) := \exp(-w_y(x)), \qquad
 w_y(x) := \log(y)a(x) + \phi_X(x), \qquad
 a(x) := \sum_{i=1}^{m+1}b_i\log(x_i) .
\end{align}
Notice that $w_y: \textup{int}\,S_1\to\R$ is smooth where $\textup{int}\,S_1 = S_1\setminus\del S_1$ is the interior.  Moreover, $g_y = \exp(-w_y)$ can be continuously extended to $S_1$ with $g_y|\del S_1 = 0$.

Let $Z_y := \int_{S_1}\exp(-w_y)\,\vol^{S_1}$ and 
\begin{equation}
 \mu_y = Z_y^{-1}\exp(-w_y)\,\textup{vol}^{S_1}
\end{equation}
the associated probability measure on $S_1$. From the above scaling transformation we obtain $\EV_y[X_i] = y\EV_{\mu_y}[X_i]$ where $X_i = \vv<X,e_i>$ for the $i$-th standard basis vector. 

As above, consider $\gamma(x) = \vv<v,x>$ for a fixed vector $v\in\R^{m+1}$. The variance from equation~\eqref{e:Hess_q} can be now expressed as 
\begin{align}
 \vv<v ,\var_y[\frac{\delta h}{\delta u}]v >
 = y^2\var_{\mu_y}[\gamma]. 
\end{align}
This gives
\begin{align}
 \vv<v , f_Y(y)\var_y[\frac{\delta h}{\delta u}]v >
 = 
 \exp\Big(
   -\frac{\beta}{2}\log(y)^2 + \log(y)(\sum_j(B\mu)_j+1)
 \Big)
 Z_y\var_{\mu_y}[\gamma]. 
\end{align}
Therefore, 
\begin{align}\label{e:del_yVar}
 \del_y\vv<v , f_Y(y)\var_y[\frac{\delta h}{\delta u}]v >
 = 
 \frac{1}{y}&\Big(\Big(
  -\beta\log(y) + \sum_j(B\mu)_j+1
 \Big)\var_{\mu_y}[\gamma]
 -
 \EV_{\mu_y}\Big[a(\gamma - \EV_{\mu_y}[\gamma])^2 \Big]
 \Big) \\
 \notag 
 &\cdot\exp\Big(
   -\frac{\beta}{2}\log(y)^2 + \log(y)(\sum_j(B\mu)_j+1)
 \Big)
 Z_y 
\end{align}
and this expression is negative if, and only if, 
\begin{align}
\label{e:negCond}
 \Big(
  -\beta\log(y) + \sum_j(B\mu)_j+1 - \EV_{\mu_y}[a] 
 \Big)\var_{\mu_y}[\gamma]
 \le 
 \textup{Cov}_{\mu_y}\Big[a; (\gamma - \EV_{\mu_y}[\gamma])^2\Big] 
 = -y\del_y\var_{\mu_y}[\gamma]
\end{align}

\begin{remark}
The validity of this inequality (once shown) implies a Gronwall estimate for the variance $\var_{\mu_y}[\gamma]$. However, this is not very useful since we will show below in \eqref{e:COV} and \eqref{e:LHS_cond} that the right hand side of this equation is positive while the left hand side is negative.  
\end{remark}

\begin{remark}[Score] 
For a smooth function $\xi: \R_{>0}\times S^1\to\R$ we have $\del_y\EV_{\mu_y}[\xi] = \EV_{\mu_y}[\del_y\xi] - y^{-1} \cov_{\mu_y}[a, \xi]$. This means that $-y^{-1}a$ is the score associated to $(\mu_y)_{y>0}$ in the terminology of information theory.   
\end{remark}

We parametrize $S_1 \cong \{ x\in\R^m: x_i\ge0, \sum_{i=1}^m u_i x_i \le 1 \} = \mathcal{S}_1$ via $x_{m+1} = u_{m+1}^{-1}(1-\sum_{i=1}^m u_i x_i)$, and thus define $g_y^m(x_1,\dots,x_m) = g_y(x_1,\dots,x_m, x_{m+1})$. 

Critical points of $g_y^m$ are characterized by
\begin{equation}
 \log(y)\Big(\frac{b_i}{x_i} - \frac{u_i}{u_{m+1}}\frac{b_{m+1}}{x_{m+1}}\Big)
 = 
 -\del_i \phi_X(x) + \frac{u_i}{u_{m+1}}\del_{m+1}\phi_X
\end{equation}
for all $i = 1,\dots,m$ and where $x_{m+1}$ is viewed as a dependent variable. From the definition of $\phi_X$ this is equivalent to
\begin{align}
\label{e:CPequ_m}
 \log(y)\Big(b_i u_{m+1}x_{m+1} - b_{m+1} u_ix_i\Big) 
 &=
 \Big(
  (B\mu)_i-1
 \Big)u_{m+1}x_{m+1}
 -  \Big(
  (B\mu)_{m+1}-1
 \Big)u_i x_i \\ 
 \notag 
 &\phantom{XX}
 - \sum_{j=1}^{m+1}\Big(
  B_{ij}u_{m+1} x_{m+1} -  B_{m+1,j}u_i x_i
 \Big)
 \log(x_j)
\end{align}

\begin{proposition}[Barycentric solution]\label{prop:bc_sol}
Assume that $b_i>0$ for all $i=1,\dots,m+1$. Then there are $y_0$ and $y_1$ such that there exists a solution $x^*(y)$ to  \eqref{e:CPequ_m} for all $0<y\le y_0$ and all $y_1\le y<\infty$. Moreover, $x^*(y)$ depends smoothly on $y$, and
\[
 \lim_{y\to0} x^*(y)
 = x^*
 = \lim_{y\to\infty} x^*(y),
 \qquad
 x^* = \Big( \frac{b_i}{u_i\beta} \Big)_{i=1}^{m+1}.
\]
\end{proposition}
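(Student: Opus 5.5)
Divide \eqref{e:CPequ_m} by $\log(y)$ and set $\varepsilon := 1/\log(y)$. Then $y\to0$ and $y\to\infty$ both correspond to $\varepsilon\to0$, from the negative and the positive side respectively. Write the system for the unknowns $x=(x_1,\dots,x_m)$, with $x_{m+1}=u_{m+1}^{-1}(1-\sum_{i\le m}u_ix_i)$, as $G(x,\varepsilon)=0$, where
\begin{align*}
 G_i(x,\varepsilon)
 &= b_i u_{m+1}x_{m+1} - b_{m+1}u_ix_i \\
 &\phantom{==} - \varepsilon\Big[\big((B\mu)_i-1\big)u_{m+1}x_{m+1} - \big((B\mu)_{m+1}-1\big)u_ix_i - \sum_{j=1}^{m+1}\big(B_{ij}u_{m+1}x_{m+1}-B_{m+1,j}u_ix_i\big)\log(x_j)\Big],
\end{align*}
for $i=1,\dots,m$. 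On the open set $\{x_j>0,\ j=1,\dots,m+1\}\times\R$ the map $G$ is smooth, including at $\varepsilon=0$. The plan is to apply the implicit function theorem at $\varepsilon=0$.

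\emph{Step 1: the limit equation.} At $\varepsilon=0$ the system reads $b_iu_{m+1}x_{m+1}=b_{m+1}u_ix_i$. Thus $u_ix_i/b_i$ is independent of $i$, equal to some $t$. The constraint $\sum_{i=1}^{m+1}u_ix_i=1$ gives $t\beta=1$, so $x_i=b_i/(u_i\beta)$. This is $x^*$, and it lies in the interior of $\mathcal{S}_1$ precisely because all $b_i>0$ (hence also $\beta>0$). This is where the hypothesis is used: it places $x^*$ where the $\log(x_j)$ terms, and therefore $G$, are smooth.

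\emph{Step 2: nondegeneracy.} I compute $D_xG(x^*,0)$. Using $\partial x_{m+1}/\partial x_k=-u_k/u_{m+1}$, I get
\[
 \partial_k G_i(x^*,0) = -b_iu_k - b_{m+1}u_i\delta_{ik}.
\]
So $D_xG = -(D + b\,u^\top)$ with $D=\mathrm{diag}(b_{m+1}u_i)$, $b=(b_1,\dots,b_m)^\top$ and $u=(u_1,\dots,u_m)^\top$. This is a rank-one perturbation of a positive diagonal matrix. By the matrix determinant lemma,
\[
 \det(D+bu^\top) = \det D\,\big(1+u^\top D^{-1}b\big) = \det D\,\Big(1+\sum_{i\le m} b_i/b_{m+1}\Big) = \det D\cdot \beta/b_{m+1} > 0,
\]
again using $b_i>0$. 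Hence $D_xG(x^*,0)$ is invertible. I expect this step to be the only place needing care, namely that the derivative in the dependent variable $x_{m+1}$ is included correctly, though it appears routine.

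\emph{Step 3: conclusion.} The implicit function theorem now gives $\varepsilon_0>0$ and a smooth curve $\varepsilon\mapsto\tilde x(\varepsilon)$ on $(-\varepsilon_0,\varepsilon_0)$ with $G(\tilde x(\varepsilon),\varepsilon)=0$ and $\tilde x(0)=x^*$; shrinking $\varepsilon_0$ keeps it in the interior. Set $x^*(y)=\tilde x(1/\log y)$, choose $y_0=e^{-1/\varepsilon_0}$ and $y_1=e^{1/\varepsilon_0}$, and use $\varepsilon\neq0$ to multiply back by $\log y$ and recover \eqref{e:CPequ_m}. Smoothness in $y$ follows from composition, since $y\mapsto1/\log y$ is smooth away from $y=1$. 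The two limits as $y\to0$ and $y\to\infty$ both equal $\tilde x(0)=x^*$ by continuity.
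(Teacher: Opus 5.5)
Your proof is correct. It rests on the same observation as the paper's proof: after dividing by $\log(y)$, \eqref{e:CPequ_m} is the linear system $M x_{(m)} = b_{(m)}$ with $M = D + b u^{\top}$, perturbed by a smooth, bounded term of size $1/\log(y)$. The difference is the final tool. You apply the implicit function theorem at $\varepsilon = 1/\log(y) = 0$. The paper instead rewrites the equation as the fixed point problem $x = \Psi_y(x) = M^{-1}b_{(m)} - \frac{1}{\log(y)}M^{-1}\rho(x)$ and invokes the uniform contraction principle on the truncated simplex $\mathcal{S}_1^{\tau}$.

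Your route has three advantages:
\begin{enumerate}
\item It actually verifies that $M$ is invertible; the paper only asserts this, and your determinant $\det D\cdot\beta/b_{m+1}>0$ settles it.
\item It treats $y\to0$ and $y\to\infty$ in one stroke, via the sign of $\varepsilon$.
\item It gives smoothness in $y$ without checking that $\Psi_y$ maps $\mathcal{S}_1^{\tau}$ into itself with a small Lipschitz constant.
\end{enumerate}

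The paper's route gives a slightly stronger conclusion. Its fixed point is unique in the whole truncated simplex $\mathcal{S}_1^{\tau}$, for any fixed $\tau<\min_i b_i/(u_i\beta)$. This is the kind of uniqueness that Section~\ref{sec:univ} later leans on. The implicit function theorem only gives uniqueness in some small neighbourhood of $x^*$. You can recover the stronger statement by compactness: solutions in $\mathcal{S}_1^{\tau}$ with $\varepsilon\to0$ can only accumulate at solutions of the limit system, and since $M$ is invertible, $x^*$ is its only solution.

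One cosmetic point: with $y_0 = e^{-1/\varepsilon_0}$, the endpoint $y=y_0$ corresponds to $\varepsilon=-\varepsilon_0$. That value lies outside the open interval provided by the implicit function theorem. Take, for example, $y_0 = e^{-2/\varepsilon_0}$ and $y_1 = e^{2/\varepsilon_0}$.
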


\begin{proof}
Let ${\tau}>0$ such that ${\tau} < \frac{b_i}{u_i\beta}$ for all $i=1,\dots,m+1$. Define $\mathcal{S}_1^{\tau} = \{x \in \mathcal{S}_1: x_i \ge {\tau}\}$. Let $\rho: \mathcal{S}_1^{\tau}\to \R^m$ denote the function defined by the right hand side of \eqref{e:CPequ_m}. This function is smooth and bounded by construction.  Thus \eqref{e:CPequ_m} is equivalent to 
\begin{equation}
 b_i - \sum_{j=1}^m M_{ij} x_j
 =
 b_i\Big(1-\sum_{k=1}^mu_k x_k\Big) - b_{m+1}u_i x_i
 = \frac{\rho_i(x)}{\log(y)}
\end{equation}
where the invertible matrix $M$ is defined by the first equation.
Hence \eqref{e:CPequ_m} is equivalent to the fixed point problem $x=\Psi_y(x)$ with parameter $y$ where 
\begin{equation}
 \Psi_y(x_{(m)}) = M^{-1} b_{(m)} - \frac{1}{\log(y)}M^{-1}\rho(x_{(m)})
\end{equation}
with $b_{(m)} = (b_1,\dots,b_m)$ and $x_{(m)}\in\mathcal{S}_1^{\tau}$.  By choosing $y_0$ sufficiently small (resp.\ $y_1$ sufficiently large), it follows that the non-constant part of $\Psi_y$ tends to $0$ uniformly in $x_{(m)}\in\mathcal{S}_1^{\tau}$. This implies that $\Psi_y: \mathcal{S}_1^m\to\mathcal{S}_1^m$ is a contraction with uniform and smooth dependency on $y$. By the uniform contraction principle there is therefore a solution $x_{(m)}(y) = \Psi_y(x_{(m)}(y))$, and the solution depends smoothly on $y$.  

Moreover, $M^{-1}b_{(m)} = (\frac{b_i}{u_i\beta})_{i=1}^{m}$ and since $u_{m+1}x^*_{m+1} = 1 - \sum_{k=1}^m b_k/\beta = b_{m+1}/\beta$ the claim follows.
\end{proof}

\begin{remark}
Since $\beta>0$ by positive definiteness of $B$, at least one $b_i$ has to satisfy $b_i>0$. If there is now a $b_j\le0$ then the construction in the proof above shows that the assumption of a fixed ${\tau}>0$ and a critical point solution $x_{(m)}(y)\in\mathcal{S}_1^{\tau}$ for $y\to0$ or $y\to\infty$ leads to a contradiction. Hence when there is at least one $b_j\le0$ critical point solutions are asymptotically confined to the boundary layer of $S_1$. 
\end{remark}

Characterizing the boundary layer solutions of \eqref{e:CPequ_m} involves many case distinctions. To avoid this, we will confine ourselves to the case $m=1$. We assume the ordering of indices is such that $b_1 > 0$. Equation~\eqref{e:CPequ_m} thus specializes to
\begin{align}\label{e:CPequ2}
 \log(y)\Big(b_1 - \beta u_1x_1\Big) 
 =
  (&B\mu)_1 - 1 + B_{12}\log(u_2) 
 - \Big( (B\mu)_1 + (B\mu)_2 - 2 + b_2\log(u_2) \Big)u_1x_1 \\
 \notag 
 & - \Big(B_{11} - b_1u_1x_1 \Big) \log(x_1) 
 - \Big(B_{12} - b_2u_1x_1 \Big) \log(1-u_1x_1)
\end{align}

\begin{proposition}(Corner solutions)\label{prop:corn_sol}
Let $m=1$ and $b_1>0$. The following are true. 
\begin{enumerate}
\item 
There is a $y_2>0$ and a critical point solution $x_1^*(y)$ of \eqref{e:CPequ2} which depends smoothly on $y>y_2$. Moreover, $x_1^*(y)\to0$ as $y\to\infty$. 
\item 
If $b_2>0$, then there is a $y_1>0$ and a critical point solution $x_1^*(y)$ of \eqref{e:CPequ2} which depends smoothly on $y>y_1$. Moreover, $x_1^*(y)\to u_1^{-1}$ as $y\to\infty$. 
\item 
If $b_2 < 0$, then there is a $y_0>0$ and a critical point solution $x_1^*(y)$ of \eqref{e:CPequ2} which depends smoothly on $y<y_0$. Moreover, $x_1^*(y)\to u_1^{-1}$ as $y\to 0$. 
\end{enumerate}
\end{proposition}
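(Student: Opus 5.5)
The plan is to treat \eqref{e:CPequ2} as a scalar equation $F(x_1,y)=0$, where
\[
F(x_1,y):=\log(y)\bigl(b_1-\beta u_1x_1\bigr)-R(x_1)
\]
and $R$ denotes the right hand side of \eqref{e:CPequ2}. In each of the three cases we isolate the single logarithmic singularity of $R$ at the relevant corner of $\mathcal{S}_1=[0,u_1^{-1}]$. We balance it against the leading part of the left hand side, which gives an explicit power-law guess for the corner location. Existence then follows from a sign change (intermediate value theorem) in a shrinking window around the guess. Smoothness in $y$ follows from the implicit function theorem, because $\partial_{x_1}F$ is dominated by the singular term and therefore does not vanish. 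Unlike Proposition~\ref{prop:bc_sol}, no assumption on $b_2$ is needed for item (1), since only $b_1>0$ and $B_{11}>0$ enter.

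For item (1), write $x_1\to0$. All terms of $R$ stay bounded except $-(B_{11}-b_1u_1x_1)\log(x_1)\approx -B_{11}\log x_1\to+\infty$, where $B_{11}>0$ by positive definiteness. The left hand side is $b_1\log y+O(x_1\log y)$. So the equation reads $B_{11}\log x_1=-b_1\log y+C+o(1)$, and the candidate is $x_1\approx e^{C/B_{11}}y^{-b_1/B_{11}}$, which tends to $0$ as $y\to\infty$ because $b_1>0$. I will work in the variable $s=\log x_1$ on a window $|s-s_0(y)|\le 1$, with $s_0(y)=(C-b_1\log y)/B_{11}$. On this window the correction terms are $O(e^{s}\log y)+O(e^{s})$, and these are $o(1)$ since $e^{s}\sim y^{-b_1/B_{11}}$. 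Hence $F$ has opposite signs at the two ends of the window for $y>y_2$, and a root exists. Moreover $\partial_s F=e^{s}\partial_{x_1}F = B_{11}+O(e^{s}\log y)+O(e^s|s|)>0$, so the root is unique in the window, and by the implicit function theorem it depends smoothly on $y$.

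For items (2) and (3), set $r=1-u_1x_1\to0$. The singular term is now $-(B_{12}-b_2u_1x_1)\log r$. Since $B_{12}-b_2=-B_{22}$, this equals $B_{22}\log r+O(r\log r)$ with $B_{22}>0$. The left hand side is $\log(y)(b_1-\beta+\beta r)=\log(y)(-b_2+\beta r)$. The balance therefore gives $B_{22}\log r\approx -b_2\log y+C'$, that is, $r\approx e^{C'/B_{22}}y^{-b_2/B_{22}}$. This tends to $0$ exactly when $b_2\log y\to+\infty$, namely $y\to\infty$ if $b_2>0$ and $y\to0$ if $b_2<0$. The same window argument in the variable $\log r$ then applies verbatim, with $\partial_{\log r}F\approx -B_{22}\neq0$, and yields existence, local uniqueness and smoothness. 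The limit $x_1^*(y)\to u_1^{-1}$ follows from $r\to0$.

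The main obstacle is controlling the cross term $\beta r\log y$ (respectively $\beta u_1x_1\log y$). It multiplies a small quantity by a divergent one, so one must check that $r\log y\to0$ on the window. This holds because $r$ decays like a positive power of $y$ (respectively of $1/y$), but the window has to be chosen in logarithmic coordinates precisely so that this estimate is uniform. I would settle this first, before assembling the sign-change and implicit function theorem arguments.
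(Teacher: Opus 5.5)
Your proposal is correct, but it reaches the result by a different route than the paper.

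\textbf{The paper's argument.} It also isolates the singular logarithm, but then solves for it. On a fixed corner interval $\{x\le\tau\}$ it rewrites \eqref{e:CPequ2} as $x=\Psi_y(x)=\exp(M_y(x))$, where $M_y$ is obtained by dividing by $B_{11}-b_1u_1x$. It then observes that the Lipschitz constant $\exp(\max M_y)\,|\max M_y'|$ tends to $0$. The uniform contraction principle then gives existence, uniqueness on $[0,\tau]$ and smooth dependence on $y$ at once. Items (2) and (3) are only declared analogous.

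\textbf{Your argument.} You centre a window of fixed width, in logarithmic coordinates, at the explicit balance point. Existence comes from a sign change. Local uniqueness and smoothness come from $\partial_sF=B_{11}+o(1)$ (resp.\ $\partial_{\log r}F=-B_{22}+o(1)$) and the implicit function theorem.

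\textbf{What each buys.} The contraction gives uniqueness in a \emph{fixed} neighbourhood of the corner. The paper later invokes this when it speaks of the unique corner critical point. Your argument gives uniqueness only inside the moving window; upgrading it would need an extra sign check of $F$ between the window and $\tau$, which the statement does not require.

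In exchange, your route is more elementary and avoids a condition the paper leaves unstated. When $b_2>0$, the paper's choice $0<\tau<u_1^{-1}$ must be tightened to $\tau<b_1/(\beta u_1)$. Otherwise $M_y\not\to-\infty$ uniformly, and $[0,\tau]$ also contains the center solution of Proposition~\ref{prop:bc_sol}, so $\Psi_y$ cannot be a contraction.

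Moreover, the window width may be any fixed $\varepsilon>0$. So your argument actually proves $\log x_1^*(y)=s_0(y)+o(1)$, i.e.\ the zeroth order expansions \eqref{e:asym_log0} and \eqref{e:asym_RC} that the paper states without proof. Your bookkeeping $B_{12}-b_2=-B_{22}$ and $b_1-\beta=-b_2$ also makes clear why the sign of $b_2$ selects $y\to\infty$ versus $y\to0$.

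Carried through, your right-corner balance reads $B_{22}\log r=(B\mu)_2-1+B_{22}\log u_2+B_{12}\log u_1-b_2\log y+o(1)$. The constant $c_2$ in \eqref{e:asym_RC} therefore appears to lack the term $B_{12}\log u_1$.
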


\begin{proof}
Item (1). 
Let $u_1^{-1} > {\tau} > 0$ and define $\mathcal{S}_1^{\tau} = \{x\in\mathcal{S}_1: x\le {\tau}\}$. 
We transform \eqref{e:CPequ2} to a fixed point equation $x = \Psi_y(x)$ for $x\in\mathcal{S}_1^{\tau}$, where $\Psi_y(x) = \exp(M_y(x))$ and
\begin{align}
 M_y(x)
 = 
  \Big( B_{11} - b_1 u_1 x \Big)^{-1}
  \Big(&
    (B\mu)_1 - 1 + B_{12}\log(u_2)
    - \Big( (B\mu)_1 + (B\mu)_2 - 2 + b_2\log(u_2) \Big) u_1 x_1 \\
    \notag
    &- \Big(B_{12} - b_2 u_1 x_1 \Big) \log(1 - u_1 x_1)
      - \log(y)\Big(b_1 - \beta u_1 x_1\Big)  
  \Big)
\end{align}
Since $|\Psi_y(z_2) - \Psi_y(z_1)| \le m(y) |z_2-z_1|$ with $m(y) = \exp(\max_{z\in\mathcal{S}_1^{\tau}}M_y(z))|\max_{z\in\mathcal{S}_1^{\tau}}M_y'(z)| \to 0$ as $y\to\infty$ it follows that $\Psi_y: \mathcal{S}_1^{\tau} \to \mathcal{S}_1^{\tau}$ is a contraction which is uniform and smooth in $y>y_2$ for a suitable $y_2$. Hence a fixed point solution $x^*(y) = \Psi_y(x^*(y))$ exists for $y>y_2$, and  $x^*(y)$ is smooth in $y$.  Moreover, $x^*(y) = \Psi_y(x^*(y))\to 0$ as $y\to\infty$. 

Items (2) and (3) follow analogously by restricting to $\mathcal{S}_1^{\tau} = \{x\in\mathcal{S}_1: x \ge {\tau}\}$.  
\end{proof}

As $y\to\infty$, this implies the zeroth and first order asymptotic expansions for the critical point solution $x^*_1(y)$ in the left corner $\{x\in\mathcal{S}_1: x\le {\tau}\}$, respectively:
\begin{align}
\label{e:asym_log0}
 \log x^*_1(y)
 &\sim 
   B_{11}^{-1}
    \Big(
     c_1
     - b_1\log(y)
    \Big) 
 \\
 \label{e:asym_log1}
  \log x^*_1(y)
 &\sim 
   B_{11}^{-1}
    \Big(
      1 
      + 
      B_{11}^{-1} b_1 u_1 x^*_1(y) \Big) 
    \Big(
     c_1
     - b_1\log(y)
    \Big) \\
    \notag 
    &\phantom{XX}
    - B_{11}^{-1}
     \Big( (B\mu)_1 + (B\mu)_2 - 2 
      + B_{12}   
          + \beta\log(y)\Big) u_1 x^*_1(y) 
\end{align}
where $c_1 = (B\mu)_1 - 1 + B_{12}\log(u_2)$.

Similarly, the zeroth order expansion of $x^*_1(y)$ in the right corner $\{x_1\in\mathcal{S}_1: x_1\ge {\tau}\}$ is as follows. Let either $b_2>0$ and $y\to\infty$ or $b_2<0$ and $y\to0$. Then
\begin{equation}
\label{e:asym_RC}
 1 - u_1 x_1^*(y)
 \sim 
 \exp\Big(
  B_{22}^{-1}\Big(
   c_2 - b_2 \log(y)
  \Big)
 \Big)
 = 
 u_2 \exp\Big(
  B_{22}^{-1}\Big(
     (B\mu)_2 - 1 - b_2 \log(y)
  \Big)
 \Big)
\end{equation}
where $c_2 = (B\mu)_2 - 1 + B_{22}\log(u_2)$.

\subsection{Concavity in the left tail}

\begin{theorem}[Left tail]\label{thm:conc}
The following are true. 
\begin{enumerate}
\item 
Assume $b_i > 0$ for all $i = 1,\dots,m$.
If $q_{\alpha_0}(u) < \eta\exp\Big(\frac{1}{\beta}(\sum_j(B\mu)_j+1)\Big)$ for some $0 < \eta < \min\{u_1, \dots,u_{m+1}\}$,  then $q_{\alpha}$ is strictly concave at $u$ for all $\alpha\le\alpha_0$. 
\item 
Let $m=1$ and $b_2 < 0$. Then there exists an $\alpha_0 > 0$ such that $q_{\alpha}$ is concave in $u$ for all $\alpha\le\alpha_0$. 
\end{enumerate}
\end{theorem}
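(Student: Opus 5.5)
Since $h(u,x)=\vv<u,x>$ is linear, $\delta^2h/\delta u^2=0$ and \eqref{e:Hess_q} reduces to $\frac{\delta^2 q_\alpha}{\delta u^2}=-f_Y^{-1}\,\del_y\big(f_Y\var_y[\delta h/\delta u]\big)$ at $y=q_\alpha(u)$. The plan is to test this against an arbitrary $v\in\R^{m+1}$ and use \eqref{e:del_yVar}. Because the prefactor $y^{-1}\exp(\cdots)Z_y$ is positive, concavity at $u$ is equivalent to
\[
 D(y):=\Big(-\beta\log(y)+\sum_j(B\mu)_j+1\Big)\var_{\mu_y}[\gamma]-\EV_{\mu_y}\big[a\,(\gamma-\EV_{\mu_y}[\gamma])^2\big]\;\ge\;0 ,
\]
and strict concavity to $D(y)>0$ whenever $\var_{\mu_y}[\gamma]>0$. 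Here $\gamma=\vv<v,x>$ restricted to $S_1$. Since $\mu_y$ has a positive density on $\textup{int}\,S_1$, the variance vanishes only if $\gamma$ is constant on $S_1$, i.e.\ $v\parallel u$. That direction is the kernel forced by the positive $1$-homogeneity $q_\alpha(tu)=tq_\alpha(u)$, so "strict" is understood modulo it. Note also that $y=q_\alpha(u)>0$ and $\alpha\mapsto q_\alpha(u)$ is nondecreasing, so it suffices to prove the sign condition for all $0<y\le q_{\alpha_0}(u)$.

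\emph{Part (1).} I will bound $a$ from above uniformly on $S_1$. On $S_1$ we have $u_ix_i\le 1$, so $\log x_i\le-\log u_i<-\log\eta$. Assuming $b_i>0$ for all indices, including $m+1$ by the ordering convention (this is where the sign hypothesis enters), this gives $a(x)=\sum_i b_i\log x_i\le-\beta\log\eta$. Therefore
\[
\EV_{\mu_y}\big[a(\gamma-\EV_{\mu_y}[\gamma])^2\big]\le-\beta\log\eta\,\var_{\mu_y}[\gamma],
\]
and hence $D(y)\ge\big(-\beta\log y+\sum_j(B\mu)_j+1+\beta\log\eta\big)\var_{\mu_y}[\gamma]$. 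The bracket is strictly positive precisely when $y<\eta\exp\big(\frac1\beta(\sum_j(B\mu)_j+1)\big)$. This holds for every $y=q_\alpha(u)\le q_{\alpha_0}(u)$, which proves (1).

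\emph{Part (2).} Now $m=1$ and $b_2<0$. Here $a=b_1\log x_1+b_2\log x_2$ is unbounded above near the corner $x_2\to0$, so the uniform bound fails and I will use asymptotics as $y\to0$ (equivalently $\alpha\to0$, since $q_\alpha(u)\to0$). The density of $\mu_y$ is $\propto y^{-a}e^{-\phi_X}$, which for $y\to0$ favours large $a$, i.e.\ the right corner.

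The first step is to show that $\mu_y$ concentrates, up to exponentially small errors in $\log y$, in $\{x_1\ge\tau\}$ around the corner critical point of Proposition~\ref{prop:corn_sol}(3). The second step is to pass to the coordinate $t=\log x_2$. There $w_y$ is approximately quadratic,
\[
 w_y\approx\tfrac{B_{22}}2\big(t-t^*(y)\big)^2+\text{const},\qquad t^*(y)=B_{22}^{-1}\big(c_2-b_2\log y\big)
\]
by \eqref{e:asym_RC}, so the fluctuations of $t$ are $O(1)$. Consequently $a=b_2t+O(1)=-\frac{b_2^2}{B_{22}}\log y+O(1)$ on the bulk of the mass, even after reweighting by $(\gamma-\EV_{\mu_y}[\gamma])^2$: since $\gamma-\EV\gamma$ is a linear function of $x_2=e^t$, this weight only shifts the effective Gaussian in $t$ by $O(1)$.

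This yields
\[
D(y)=\Big(-\big(\beta-b_2^2/B_{22}\big)\log y+O(1)\Big)\var_{\mu_y}[\gamma].
\]
The leading coefficient is positive because $\beta B_{22}-b_2^2=(B_{11}+2B_{12}+B_{22})B_{22}-(B_{12}+B_{22})^2=\det B>0$. Hence $D(y)>0$ for all small $y$, which gives $\alpha_0$.

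The main obstacle is making the second step rigorous. One has to control the weighted expectation $\EV_{\mu_y}[a(\gamma-\EV\gamma)^2]/\var_{\mu_y}[\gamma]$ uniformly in $v$: both numerator and denominator are of order $x_2^{*2}$, which is exponentially small, so the $O(1)$ error must be uniform after normalisation. One also has to rule out that leakage of mass toward the left corner or the interior contributes to $\var_{\mu_y}[\gamma]$ at a comparable order. I would first try a Laplace-method estimate in $t$, with explicit Gaussian tail bounds, splitting $S_1$ into $\{x_1\ge\tau\}$ and its complement.
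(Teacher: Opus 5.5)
Your proposal is correct. Part (1) is the paper's argument without the rescaling. The paper passes to $\tilde u=u/\eta$, where $S_1(\tilde u)\subset\{x_i<1\}$ gives $a<0$. It then applies \eqref{e:del_yVar} at $\tilde u$ and returns via $q_\alpha(\tilde u)=\eta^{-1}q_\alpha(u)$. Your bound $a<-\beta\log\eta$ on $S_1(u)$ is the same estimate, since $a(\eta x)=a(x)+\beta\log\eta$.

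The hypothesis $b_{m+1}>0$ is genuinely needed in both versions. It should come from reading the statement as $i=1,\dots,m+1$, not from an ordering convention. For $b_{m+1}<0$, $a$ is unbounded above on $S_1$, which is exactly the situation of Item (2). Your observation that $1$-homogeneity puts $u$ into the kernel of the Hessian, so that ``strict'' can only hold transversally to $u$, is a correct refinement of the statement.

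Part (2) is where you genuinely differ, in how the weighted term is handled.

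\emph{The paper's route.} Section~\ref{sec:b_2_le_0} splits the term into $\EV_{\mu_y}[a]\var_{\mu_y}[\gamma]+\cov_{\mu_y}[a;(\gamma-\EV_{\mu_y}[\gamma])^2]$. It transfers the skewness computation of \eqref{e:COV} to the right corner to get a negative covariance as $y\to0$, and concludes that \eqref{e:negCond} fails. It leaves implicit the positivity of $-\beta\log y+\sum_j(B\mu)_j+1-\EV_{\mu_y}[a]\sim-\frac{\det B}{B_{22}}\log y$, the analogue of \eqref{e:LHS_cond}.

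\emph{Your route.} You show that the $(\gamma-\EV_{\mu_y}[\gamma])^2$-reweighted mean of $a$ is still $-\frac{b_2^2}{B_{22}}\log y+O(1)$. So the covariance is only an $O(1)\cdot\var_{\mu_y}[\gamma]$ correction whose sign is irrelevant, and everything rests on $\beta B_{22}-b_2^2=\det B>0$. This avoids the third-derivative/skewness sign argument and makes explicit the step the paper omits. The cost is that you do not record the definite sign of the covariance.

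Two observations help with the rigorous step you flag:
\begin{enumerate}
\item Uniformity in $v$ is automatic for $m=1$. Since $\gamma-\EV_{\mu_y}[\gamma]=c(v)\,(x_2-\EV_{\mu_y}[x_2])$, the ratio $\EV_{\mu_y}[a(\gamma-\EV_{\mu_y}[\gamma])^2]/\var_{\mu_y}[\gamma]$ is the same for every $v$ not parallel to $u$.
\item Write $D(y)=\EV_{\mu_y}[(C(y)-a)(\gamma-\EV_{\mu_y}[\gamma])^2]$ with $C(y)=-\beta\log y+\sum_j(B\mu)_j+1$. Mass leaking into the interior or toward the left corner, where $a<C(y)$, only increases $D(y)$. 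Since $a\le -b_1\log u_1+b_2\log x_2$ on $S_1$, the only harmful region lies in $\{\log x_2\le\frac{\beta}{|b_2|}\log y+O(1)\}$. That region sits a distance $\frac{\det B}{|b_2|B_{22}}|\log y|+O(1)$ below $t^*(y)$, so a Gaussian tail bound in $t=\log x_2$ shows its contribution is negligible against $|\log y|\,\var_{\mu_y}[\gamma]$.
\end{enumerate}
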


\begin{proof}
Ad Item (1).
Fix $0 < \eta < \min\{u_1, \dots,u_{m+1}\}$ and consider $\tilde{u} = u/\eta$. Since $h(x,\tilde{u}) = 1$ implies that $x_i<1$ it follows that $a|S_1(\tilde{u}) < 0$. By equation~\eqref{e:del_yVar} it follows that $q_{\alpha}$ is strictly concave at $\tilde{u}$ if $q_{\alpha_0}(\tilde{u}) < \exp\Big(\frac{1}{\beta}(\sum_j(B\mu)_j+1)\Big)$. Since $q_{\alpha_0}(\tilde{u}) = \eta^{-1}q_{\alpha}(u)$ the claim follows. 

Ad Item (2). See Section~\ref{sec:b_2_le_0}.
\end{proof}

\begin{remark}
The case $b_2=0$ is not covered by Theorem~\ref{thm:conc} and would require further analysis.
\end{remark}

\begin{remark}
Regarding Item (1), note that $\inf h(u,X) = 0< \exp\Big(\frac{1}{\beta}(\sum_j(B\mu)_j+1)\Big)$. Hence for all admissible $u$ there exists a threshold level $\alpha_0 = \alpha_0(u)$ below which $q_{\alpha}$ is concave at $u$. 
\end{remark}

If we interpret $h(u,X) = \sum_j u_j X_j$ as a linear portfolio of lognormally distributed market prices, the associated loss function is $l(u,X) = h(u,X^0 - X)$, where $X^0$ is the current observation of market prices. For the following conclusion the case $m=1$ is sufficient. 

\begin{corollary}[Subadditivity of Value-at-Risk in loss tail]\label{cor:losses_sa}
Consider losses $L_1 = X_1^0 - X_1$ and $L_2 = X_2^0 - X_2$. Then there is confidence level $\alpha_0\in(0,1)$ such that $q_{1-\alpha}$ is concave at $(1-\lambda)e_1 + \lambda e_2$ for all $\lambda\in[0,1]$ and all $\alpha_0\le\alpha<1$. Hence 
\[
 \VaR_{\alpha}[L_1 + L_2] \le  \VaR_{\alpha}[L_1] + \VaR_{\alpha}[L_2]  
\]
i.e., sub-additivity holds. 
\end{corollary}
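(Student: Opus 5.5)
Apply Theorem~\ref{thm:conc} to the portfolio $h(u,X)=u_1X_1+u_2X_2$ with $u=(1-\lambda,\lambda)$, then transfer the concavity to the loss side. Write
\[
 L(u) = u_1L_1+u_2L_2 = \vv<u,X^0> - h(u,X).
\]
For a continuous distribution with positive density we have $[c-Y]_{\alpha} = c-[Y]_{1-\alpha}$. Hence
\[
 \VaR_{\alpha}[L(u)] = \vv<u,X^0> - q_{1-\alpha}(u),
\]
where $q$ is the quantile of $h(u,X)$. Since $\vv<u,X^0>$ is linear in $u$, concavity of $q_{1-\alpha}$ in $u$ is equivalent to convexity of $u\mapsto\VaR_{\alpha}[L(u)]$. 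Sub-additivity then follows from positive homogeneity of degree one, which holds because $q_{1-\alpha}(tu)=tq_{1-\alpha}(u)$ for $t>0$. Concretely, for $e_1+e_2 = 2\cdot\tfrac12(e_1+e_2)$,
\[
 \VaR_{\alpha}[L_1+L_2] = 2\,\VaR_{\alpha}\big[L(\tfrac12 e_1+\tfrac12 e_2)\big]
 \le \VaR_{\alpha}[L_1]+\VaR_{\alpha}[L_2].
\]
The inequality needs convexity only along the segment $\{(1-\lambda)e_1+\lambda e_2\}$, which is exactly what the statement asserts.

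The main step is to show concavity of $q_{1-\alpha}$ at every point of the segment with a single $\alpha_0$. On the open segment $\lambda\in(0,1)$ we have $u_i>0$, so Theorem~\ref{thm:conc} applies. Since $\beta>0$, we may label indices so that $b_1>0$. If $b_2>0$, Item~(1) gives concavity for $1-\alpha\le\alpha_1(u)$, where $\alpha_1(u)$ is characterized by $q_{\alpha_1}(u)<\eta\exp(\beta^{-1}(\sum_j(B\mu)_j+1))$. If $b_2<0$, Item~(2) gives a threshold $\alpha_0(u)$. The case $b_2=0$ is excluded by the remark, or should be handled by assumption.

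To get uniformity in $\lambda$, I would use the explicit threshold of Item~(1). The map $u\mapsto q_{\alpha}(u)$ is continuous and increasing in $\alpha$, and $q_{\alpha}(u)\to0$ as $\alpha\to0$, uniformly on compact subsets of the open positive quadrant by Dini's theorem. The constant $\eta<\min u_i$ is what degenerates at the endpoints. I would take $\eta$ proportional to $\min(u_1,u_2)$ and compare with $q_{\alpha}(u)\le u_1[X_1]_{\cdot}+u_2[X_2]_{\cdot}$-type bounds, or more simply use homogeneity: rescale to $\tilde u=u/\eta$ exactly as in the proof of Item~(1).

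The endpoints $\lambda\in\{0,1\}$ need separate treatment. There $q$ is the quantile of a single log-normal, $q_{\alpha}(e_1)=e^{\mu_1+\sqrt{A_{11}}\Phi^{-1}(\alpha)}$, and concavity in the direction along the segment must be checked through the Hessian formula \eqref{e:Hess_q} with a degenerate one-dimensional conditional law. I expect this to be the main obstacle: controlling the thresholds as $\lambda\to0,1$, where $\eta\to0$ and the conditions in Item~(1) and in the asymptotics of Section~\ref{sec:b_2_le_0} (via Proposition~\ref{prop:corn_sol}) may fail to be uniform.

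My fallback is to prove the sub-additivity inequality directly using only $\lambda=\tfrac12$, which needs concavity merely along the chord. By homogeneity, superadditivity $q_{1-\alpha}(e_1+e_2)\ge q_{1-\alpha}(e_1)+q_{1-\alpha}(e_2)$ follows from concavity of the one-variable function $\lambda\mapsto q_{1-\alpha}((1-\lambda)e_1+\lambda e_2)$ on $[0,1]$. That concavity holds if the second derivative is negative on compact subintervals $[\delta,1-\delta]$, uniformly for small $1-\alpha$, and the boundary pieces are controlled by continuity of $q$ up to $\lambda\in\{0,1\}$.
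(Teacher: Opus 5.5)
\textbf{Verdict: there is a genuine gap.} Your reduction is the paper's: $\VaR_{\alpha}[L(u)]=\vv<u,X^0>-q_{1-\alpha}(u)$ (the paper writes this as $X_1^0+X_2^0+q_{\alpha}(-e_1-e_2)$), concavity of $\phi(\lambda):=q_{p}((1-\lambda)e_1+\lambda e_2)$ with $p=1-\alpha$, then $\lambda=\tfrac12$ plus homogeneity. Your Dini argument does give a single threshold on each compact subsegment $[\delta,1-\delta]$ when $b_1,b_2>0$. For $b_2<0$, Theorem~\ref{thm:conc}(2) has no explicit threshold, so even this needs more work; you are also right that $b_2=0$ is not covered.

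The substantive content of the corollary is one $\alpha_0$ with concavity on the \emph{closed} segment. You leave exactly this open, and your fallback does not close it.
\begin{itemize}
\item At fixed $p$, Theorem~\ref{thm:conc}(1) certifies concavity only on some $[\delta(p),1-\delta(p)]$ with $\delta(p)>0$. Since $X_2>0$, we have $\phi(\lambda)\ge(1-\lambda)q_p(e_1)>0$. The hypothesis, however, needs $\phi(\lambda)<\eta\exp(\beta^{-1}(\sum_j(B\mu)_j+1))$ with $\eta<\lambda$, which fails for all small $\lambda$.
\item The chord through $\delta$ and $1-\delta$ gives $\phi(\tfrac12)\ge\tfrac12(\phi(\delta)+\phi(1-\delta))$. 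Positivity gives $\phi(\delta)\ge(1-\delta)\phi(0)$ and $\phi(1-\delta)\ge(1-\delta)\phi(1)$.
\item Together these yield only $\VaR_{\alpha}[L_1+L_2]\le\VaR_{\alpha}[L_1]+\VaR_{\alpha}[L_2]+\delta(p)\,(q_p(e_1)+q_p(e_2))$.
\item ``Continuity up to $\lambda\in\{0,1\}$'' cannot remove this error term. The reason is that $\delta$ is tied to the level, and both the error and the concavity margin vanish as $p\to0$.
\end{itemize}
For $b_2<0$ this is not a mere technicality. By \eqref{e:univ_lin_0}, $\phi(\lambda)\approx q_p(e_1)(1-\lambda)$ is asymptotically linear. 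So, heuristically, the margin $\phi(\tfrac12)-\tfrac12(\phi(0)+\phi(1))$ is $o(q_p(e_1))$, while $\phi(0)-\phi(\delta)\approx q_p(e_1)\,\delta$.

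The paper handles this step by asserting two things: strict concavity extends to the boundary, and compactness of the closed segment yields a uniform $\alpha_0$. Compactness gives a uniform $\alpha_0$ only if the pointwise thresholds do not collapse as $\lambda\to0,1$. So what is really needed is (i) concavity at $e_1$ and $e_2$ themselves, and (ii) a locally uniform version near $(\lambda,p)=(0,0)$ and $(1,0)$.
\begin{itemize}
\item Part (i) is within reach. Theorem~\ref{thm:q-der} does not require $u_i>0$, and at $u=e_1$ one has $S_y=\{x_1=y\}$. Hence $f_Y(y)\var_y[X_2-X_1]=f_{X_1}(y)\var[X_2\mid X_1=y]\propto y^{2A_{12}/A_{11}-1}\exp(-(\log y-\mu_1)^2/(2A_{11}))$. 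This is increasing for small $y$, so by \eqref{e:Hess_q} one gets $\phi''(0)<0$ for small $p$; the argument at $e_2$ is symmetric.
\item Part (ii) is the missing estimate. Neither Theorem~\ref{thm:conc} nor continuity supplies it.
\end{itemize}
As written, your argument therefore yields concavity only on compact subsegments. It does not reach the stated inequality, which involves the endpoint values $q_p(e_1)$ and $q_p(e_2)$.
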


\begin{proof}
Notice firstly that strict concavity of $q_{1-\alpha}$ extends to concavity at the boundary of $\mathcal{C}$, hence we may consider strategies $U_1 = e_1$ and $U_2 = e_2$. Since the line segment connecting $U_1$ and $U_2$ is compact, we can find $\alpha_0<1$ such that concavity holds along the segment for all $\alpha_0\le\alpha<1$. Hence $q_{1-\alpha}((1-\lambda)U_1 + \lambda U_2) \ge (1-\lambda)q_{1-\alpha}(U_1) + \lambda q_{1-\alpha}(U_2)$ by concavity. With $\lambda = 1/2$ this yields $\VaR_{\alpha}[L_1 + L_2] = X_1^0 + X_2^0 + q_{\alpha}(-e_1-e_2) \le X_1^0+X_2^0+ q_{\alpha}(-e_1) + q_{\alpha}(-e_2) = \VaR_{\alpha}[L_1] + \VaR_{\alpha}[L_2]$.  
\end{proof}

\begin{remark}
For linear portfolios of elliptically distributed risk factors it is well-known that Value-at-Risk is sub-additive for $1/2\le\alpha<1$ (see \cite[Theorem~6.8]{McNeilFreyEmbrechts2015}). To the best of our knowledge, this corollary is a new instance of the phenomenon of $\VaR$-sub-additivity in the tail of losses. 
\end{remark}

\subsection{Convexity in the right tail ($m=1$)}
To analyze the boundary layer critical point solutions of \eqref{e:CPequ2}, study the mass distribution of $\mu_y$ with varying $y$.  Figure~\ref{fig:gydens} contains a visualization of this mass flow. 
 
\begin{figure}[htbp] \centering \includegraphics[width=0.47\textwidth]{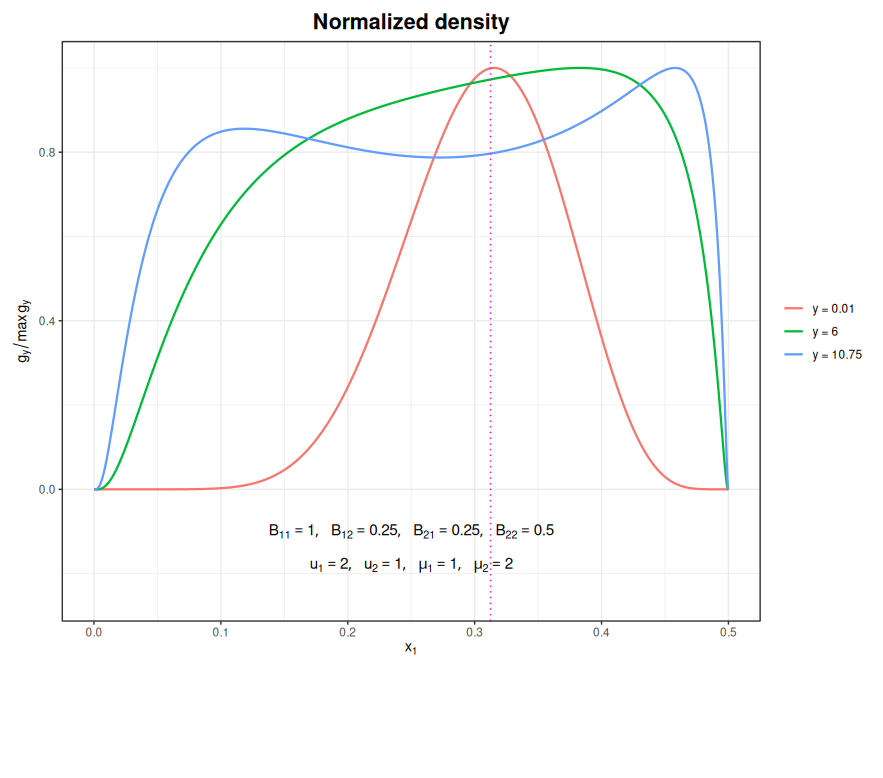} \hfill 
\includegraphics[width=0.47\textwidth]{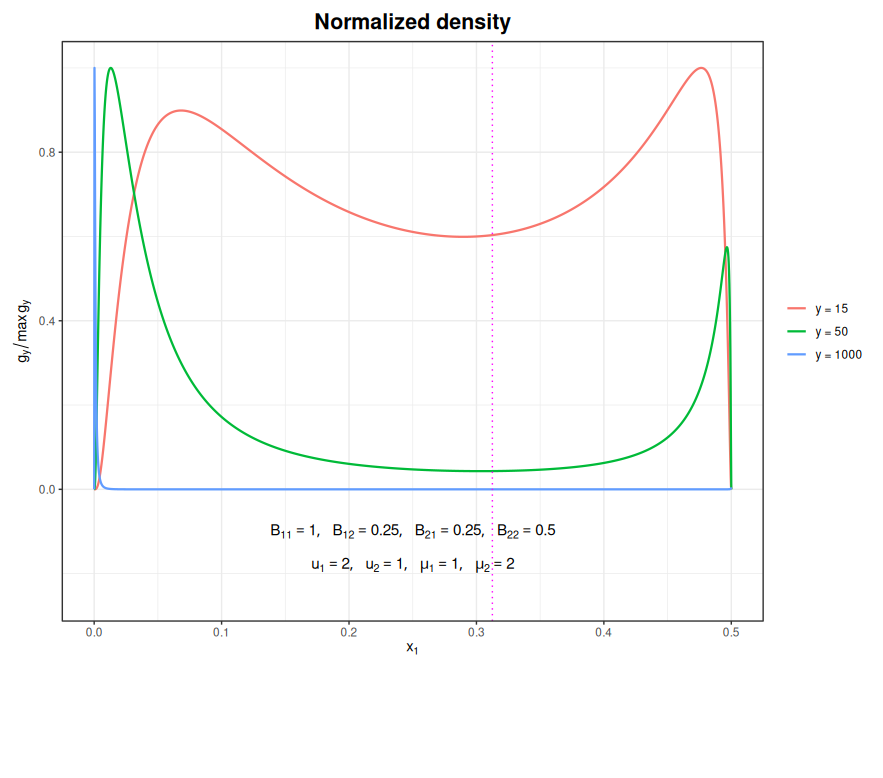}
\vspace{-10mm}
\caption{Normalized densities $g_y/\max(g_y)$ for the indicated values of $y$ where  $10.75 \approx \exp(\beta^{-1}((B\mu)_1+(B\mu)_2+1))$ is the (non-sharp) threshold for concavity from Theorem~\ref{thm:conc}. The initially concentrated peak spreads and wanders to the right, then the mass flows to the left and finally concentrates at the left corner. The central minimum finally sits at the exact location ($x^*$ in Proposition~\ref{prop:bc_sol}, dotted vertical line) of the original peak, and is created by the mass flowing to the left while the local maximum at the right corner is never annihilated completely. 
}
\label{fig:gydens} 
\end{figure}

The curvature at critical point solutions, $x^*_1 = x^*_1(y)$, of \eqref{e:CPequ2} is determined by the sign of $g_y''(x^*_1) = -w''(x^*_1)g_y(x^*_1)$, where 
\begin{align}
\label{e:w_y''}
 w_y''(x_1) 
 &= -\log{y}\Big(
  \frac{b_1}{x_1^2} + \frac{u_1^2}{u_2^2}\frac{b_2}{x_2^2}
 \Big)
  + \frac{\del^2\phi_X}{(\del x_1)^2} 
  - 2\frac{u_1}{u_2}\frac{\del^2\phi_X}{(\del x_1)(\del x_2)}
  + \frac{u_1^2}{u_2^2}\frac{\del^2\phi_X}{(\del x_2)^2} 
  \\ 
  \notag 
  &= 
  -\log{y}\Big(
  \frac{b_1}{x_1^2} + \frac{u_1^2}{u_2^2}\frac{b_2}{x_2^2}
 \Big) 
  +
  \frac{B_{11}}{x_1^2} - 2\frac{u_1}{u_2}\frac{B_{12}}{x_1x_2} + \frac{u_1^2}{u_2^2}\frac{B_{22}}{x_2^2}
  \\
  \notag 
  &\phantom{XX} 
  +
  \frac{1}{x_1^2} \Big(
    (B\mu)_1 - 1
  \Big)
  +
  \frac{u_1^2}{u_2^2 x_2^2} \Big(
    (B\mu)_2 - 1
  \Big)  
  -
   \sum_{j=1}^2 \Big(\frac{1}{x_1^2}  B_{1j} +  
   \frac{u_1^2}{u_2^2 x_2^2} B_{2j} \Big) \log(x_j)  
\end{align}
and we consider $g_y$ and $w_y$ as functions of the free variable, $x_1$. 

\subsubsection{Center solution}
When $b_1>0$ and $b_2>0$, the center solution of \eqref{e:CPequ2} appears asymptotically at both ends of the range of the quantile function. For $y\to0$ and  for $y\to\infty$, it is asymptotically  given by Proposition~\ref{prop:bc_sol}.
this solution, $x^*_1(y)$, remains bounded away from the boundary, whence it follows that the sign of
$g_y''(x^*_1(y)) = -w_y''(x^*_1(y)) g_y(x^*_1(y))$ is determined by  
\begin{equation}
 -\Big(
  \frac{b_1}{x_1^2} + \frac{u_1^2}{u_2^2}\frac{b_2}{x_2^2}\Big)\log{y}
\end{equation}
Thus the center solution, $x^*_1(y)$, is a local maximum for small $y$ and local minimum for large $y$. We will use the center solution in Section~\ref{sec:univ}.

\subsubsection{Global maximum of $g_y$ as $y\to\infty$} 
Assume $b_1>0$ and $b_2>0$. Then Proposition~\ref{prop:corn_sol} (Item 2) is symmetric in $x_1$ and $x_2$. Therefore, for $i=1,2$, we can use the zeroth order asymptotic expansion~\eqref{e:asym_log0} to evaluate $w_y''$ at $x^*_i(y)$. Let us now consider the case $i=1$. 

Let $x_y^*$ be the critical point solution in the corner $x_1=0$ as $y\to0$. This solution is asymptotically given by \eqref{e:asym_log0}. We use equation~\eqref{e:w_y''} to evaluate 
\begin{align}
 -w_y''(x_y^*)
 &\sim 
 \Big(x_y^*\Big)^{-2}\Big(
  b_1\log(y) + \sum_{j=1}^2B_{1j}\log(x_j) - (B\mu)_1 + 1
  - B_{11}
 \Big) \\ 
 \notag 
 &\sim 
  \Big(x_y^*\Big)^{-2}\Big(
  b_1\log(y) 
  + B_{11}\frac{c_1-b_1\log(y)}{B_{11}}
  + B_{12}\log(\frac{1-u_1 x_1}{u_2})
  - (B\mu)_1 + 1
  - B_{11}
 \Big) \\
 \notag 
 &= -\frac{B_{11}}{(x^*_y)^2}\\
 \notag
 &\sim 
 - B_{11}\exp\Big( -2\frac{c_1-b_1\log(y)}{B_{11}} \Big)
\end{align}
asymptotically as $y\to\infty$, and where we keep only terms of order $(x_y^*)^{-2}\log(y)$ and $(x_y^*)^{-2}$. 

The case $i=2$ is analogous, and therefore, as $y\to\infty$, the two local critical points are strict local maxima. 
The global maximum is detected by using \eqref{e:asym_log0} again. This yields the global maximum condition 
\begin{equation}
\label{e:GMC}
 \frac{b_1^2}{B_{11}} > \frac{b_2^2}{B_{22}}
\end{equation}
which holds if, and only if, the global maximum approaches the left corner, i.e.\ $x^*_1(y)\to0$ as $y\to\infty$. In this case, the corner at $x_1=0$ dominates asymptotically.  And vice versa for the right corner.

\begin{remark}
In terms of the defining matrix $A=B^{-1}$ in \eqref{e:log_dens} condition \eqref{e:GMC} is equivalent to $A_{22}>A_{11}$, i.e.\ the dominant corner is determined by the component with the largest variance. 
\end{remark}
 
If we have equality in \eqref{e:GMC} then the first order expansion \eqref{e:asym_log1} can be used analogously to determine the influence of $u_1/u_2$ on the global maximum. If all parameters are symmetric in $1$ and $2$, then we have two equally dominant corners, i.e.\ two global maxima.  

When $b_2 < 0$ we know from Proposition~\ref{prop:corn_sol} that there is only one critical point solution as $y\to\infty$, and this solution is then the global maximum.

\subsubsection{Laplace principle}  
Assume that $b_1>0$ and $b_2>0$, and that the left corner dominates, i.e.\ condition~\eqref{e:GMC} is satisfied. 

Let $y_0$ such that $x_{y_0}^*\in(0,\delta)$ with $\delta>0$ sufficiently small so that higher order terms in the Taylor approximation of $w_{y_0}$ can be neglected. For $y\ge y_0$ we have 
\begin{align}
 Z_y 
 &\ge 
 \int_0^{\delta}\exp\Big(
  -w_y(x_y^*)
  - \frac{1}{2}w_y''(x_y^*)\Big(x-x_y^*\Big)^2\Big) \,dx 
  \\
  \notag 
  &\ge
  \exp\Big(-w_y(x_y^*)\Big)
  \frac{\sqrt{\pi}}{\sqrt{w_y''(x_y^*)}} \\
  \notag
  &\sim 
  \frac{\sqrt{\pi}}{\sqrt{B_{11}}}
  \exp\Big(
   - w_y(x_y^*) )
   + \log( x_y^* ) 
   \Big)
\end{align}
where we absorb a factor $2$ from the approximating Gaussian integral.  We use \eqref{e:asym_log0} again to evaluate the leading order contributions to the terms in the bracket as
\begin{align}
 -w_y(x_y^*) + \frac{c_1 - b_1\log(y)}{B_{11}}
 &\sim 
 -\Big( b_1 \log(y) - 1)\log(x_y^*) - \frac{B_{11}}{2}\log(x_y^*)^2 
 \sim 
 \frac{b_1^2}{2 B_{11}} \log(y)^2 
\end{align}
Therefore,
\begin{equation}
 \frac{g_y(x)}{Z_y}
 \underset{\sim}{<}
 \exp\Big(
  -w_y(x) - \frac{b_1^2}{2 B_{11}}\log(y)^2 
 \Big)
 \to 
 0
\end{equation}
pointwise in $\delta \le x \le u_{1}^{-1}-\delta$ as $y\to\infty$. Thus we have the following Laplace principle for smooth functions $f: S_1\to\R$ and $\delta>0$: 
\begin{align}
\label{e:LP}
 \int_{0}^{u_1^{-1}}f(x)g_y(x)\,dx
 &\sim 
  \int_{0}^{\delta}f(x)g_y(x)\,dx
  +  \int_{u_1^{-1}-\delta}^{u_1^{-1}}f(x)g_y(x)\,dx 
  \sim 
  \int_{0}^{\delta}f(x)g_y(x)\,dx
\end{align}
as $y\to\infty$, where the second asymptotic equivalence follows from dominance of the left corner.

\subsubsection{Third derivative}
Differentiating \eqref{e:w_y''} with respect to $x_1$ and keeping only terms of order $x_1^{-3}\log(y)$ and $x_1^{-3}$ yields
\begin{align}
\label{e:w_y'''}
 w_y^{'''}(x^*_1(y))
 &\sim  
 \Big(
  2\log(y)\frac{b_1}{x_1^3} - 2\frac{B_{11}}{x_1^3}
   - 2\frac{(B\mu)_1-1}{x_1^3} 
   + 2\frac{B_{11}\log(x_1)+B_{12}\log(x_2)}{x_1^3}
   -\frac{B_{11}}{x_1^3}
 \Big)\Big|_{x_1 = x^*_1(y)}
 \\
 \notag
 &\sim
 -3\frac{B_{11}}{x^*_1(y)^3}
 + 2\frac{-(B\mu)_1+1+c_1}{x^*_1(y)^3}
 - 2\frac{B_{12}\log(u_2)}{x^*_1(y)^3}
 \\
 \notag 
 &=  -3\frac{B_{11}}{x^*_1(y)^3}
\end{align}
in $C_0$ and as $y\to\infty$, and where the second asymptotic similarity uses the zero order expansion~\eqref{e:asym_log0} and $\log(x_2) = -\log(u_2) + \mathcal{O}(x_1)$.

\subsubsection{Covariance with score}
We continue to assume that the global maximum condition~\eqref{e:GMC} holds, and now evaluate the relevant terms in condition~\eqref{e:negCond}. 
To do so  we use the Laplace principle \eqref{e:LP} to expand $w_y$ around $x^*_1(y)$:
\begin{align}
 \EV_{\mu_y}[a]
 &= 
 Z_y^{-1}\int_0^{u_1^{-1}} a(x_1)g_y(x_1)\,dx_1 \\
 \notag 
 &\sim
 Z_y^{-1}\int
  \Big(a(x^*_1(y)) + a'(x^*_1(y))(x_1-x^*_1(y))\Big)
  \exp\Big(
   -w_y(x^*_1(y)) - \frac{1}{2}w_y''(x^*_1(y))(x_1-x^*_1(y))^2
  \Big)\,dx_1 \\
  \notag 
  &\sim a(x^*_1(y))
\end{align}
Regarding the covariance expression on the right hand side of \eqref{e:negCond} we use the definition $\gamma(x) = \vv<v,x> = v_1x_1 + v_2x_2$ and note that $\gamma(x) - \EV_{\mu_y}[\gamma] = (v_1-v_2 u_1/u_2) x_1$. The scalar $v_1-v_2 u_1/u_2$ does not affect the sign of the covariance term and will therefore be omitted from now on. Thus
\begin{align}
\label{e:COV}
 &\textup{Cov}_{\mu_y}\Big[
  a; \Big(X_1 - \EV_{\mu_y}[X_1]\Big)^2 
 \Big] 
 \sim  \EV_{\mu_y}\Big[
  \Big(a - a(x^*_1(y))\Big) \Big(X_1 - \EV_{\mu_y}[X_1]\Big)^2 
 \Big] 
 \\
 \notag 
 &\sim 
 Z_y^{-1}\int a'(x^*_1(y))(x_1-x^*_1(y))^3
 \exp\Big(
 -w_y(x^*_1(y)) - \frac{1}{2}w_y^{''}(x^*_1(y))(x_1-x^*_1(y))^2 - \frac{1}{6}w_y^{'''}(x^*_1(y))(x_1-x^*_1(y))^3 
 \Big)
 \,dx_1
\end{align}
and this expression is positive since $a'(x^*_1(y)) \sim b_1/x^*_1(y) > 0$ and \eqref{e:w_y'''} implies a strict right skew.

On the other hand, using again \eqref{e:asym_log0} we find for the left hand side of \eqref{e:negCond} that 
\begin{align}
\label{e:LHS_cond}
 -\beta\log(y) + \sum_j(B\mu)_j+1 - \EV_{\mu_y}[a] 
 &\sim 
 -\beta\log(y) - a(x^*_1(y))
 \sim \Big(-\beta + \frac{b_1^2}{B_{11}}\Big)\log(y)
 = -\frac{\det B}{B_{11}}\log(y)
 < 0
\end{align}
Hence we may conclude that there exists $y_0>0$ such that $\del_y\var_{\mu_y}[\gamma] \le 0$ and consequently $\del_y(f_Y(y)\var_y)\le0$ for all $y\ge y_0$, as desired, whence we obtain:  

\begin{theorem}[Right tail]\label{thm:conv}
For all $u = (u_1, u_2)$ with $u_1\ge0$ and $u_2\ge0$ there exists a level $\alpha_0\in(0,1)$ such that $q_{\alpha}$ is convex at $u$ for all $1>\alpha\ge\alpha_0$.  
\end{theorem}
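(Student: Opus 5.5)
The plan is to use the Hessian formula \eqref{e:Hess_q} from Theorem~\ref{thm:q-der}. Since $h(u,x)=\vv<u,x>$ is linear in $u$, we have $\delta^2 h/\delta u^2=0$. So $q_\alpha$ is convex at $u$ precisely when, for every direction $v$, $\del_y\vv<v,f_Y(y)\var_y[\delta h/\delta u]v>\le 0$ at $y=q_\alpha(u)$. Because $q_\alpha(u)\to\infty$ as $\alpha\to1$, it suffices to find $y_0$ such that this derivative is nonpositive for all $y\ge y_0$. We then take $\alpha_0=F_Y(y_0,u)$, using monotonicity of $\alpha\mapsto q_\alpha(u)$. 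By \eqref{e:del_yVar}, and since the prefactor $y^{-1}\exp(\cdot)Z_y$ is positive, the sign question reduces exactly to condition \eqref{e:negCond}. In the case $m=1$, $\gamma-\EV_{\mu_y}[\gamma]$ is a scalar multiple of $X_1-\EV_{\mu_y}[X_1]$, so only one direction needs to be checked.

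\textbf{Case $b_1,b_2>0$.} After relabelling indices we may assume the dominance condition \eqref{e:GMC}; equality is handled by the first order expansion \eqref{e:asym_log1}, or by a symmetric splitting between the two corners. The Laplace principle \eqref{e:LP} localizes $\mu_y$ near the left corner critical point $x_1^*(y)$, which exists by Proposition~\ref{prop:corn_sol}. Expanding there gives $\EV_{\mu_y}[a]\sim a(x_1^*(y))$. Using \eqref{e:asym_log0}, the left hand side of \eqref{e:negCond} then behaves like $-(\det B/B_{11})\log(y)\,\var_{\mu_y}[\gamma]<0$, as in \eqref{e:LHS_cond}. The right hand side is the covariance \eqref{e:COV}. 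Its integrand expands as $a'(x_1^*)\,(x_1-x_1^*)^3$ against a density whose cubic correction comes from $w_y'''\sim -3B_{11}/(x_1^*)^3$ in \eqref{e:w_y'''}. Since $a'(x_1^*)\sim b_1/x_1^*>0$ and the density is right skewed, the third central moment is positive. Positivity survives the one-sided cutoff $x_1\ge0$, because the Gaussian width $(x_1^*)/\sqrt{B_{11}}$ is comparable to $x_1^*$, so the bulk lies inside $(0,\delta)$. Hence the left hand side of \eqref{e:negCond} is negative and the right hand side positive for large $y$, so the inequality holds.

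\textbf{Case $b_2<0$.} By Proposition~\ref{prop:corn_sol}(1) and the subsequent discussion, there is a unique critical point as $y\to\infty$, located in the left corner, and it is the global maximum. The same Laplace localization and the same two estimates therefore apply verbatim, with no need for \eqref{e:GMC}. The case $b_2=0$ does not arise here, because $\beta>0$ and the ordering convention $b_1>0$ place us in one of these cases, or the symmetric one.

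\textbf{Boundary points.} Points with $u_1=0$ or $u_2=0$ are treated by continuity of the Hessian formula, or by convexity holding on a neighborhood and passing to the limit, as in the proof of Corollary~\ref{cor:losses_sa}.

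\textbf{Main obstacle.} The hardest step is making the asymptotics rigorous. One must show that the Laplace principle \eqref{e:LP} holds with errors that are negligible relative to the third-moment term. This is delicate because $\var_{\mu_y}[\gamma]$ and the covariance are both small, of orders $(x_1^*)^2$ and $(x_1^*)^3$ respectively, up to factors, while the left hand side carries an extra $\log y$. The approach I would try first is to rescale $x_1=x_1^*(y)\,t$. In the variable $t$, the density $g_y$ should converge, after normalization, to an explicit limiting law on $(0,\infty)$ obtained from \eqref{e:asym_log0}; I expect a log-normal-type profile in $t$. On that limit, the third central moment, weighted by $b_1 t$, is computed explicitly and is positive. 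The contributions from the middle region and from the right corner are exponentially small by the bound $g_y/Z_y\lesssim\exp(-w_y-\tfrac{b_1^2}{2B_{11}}\log(y)^2)$. After the rescaling, \eqref{e:negCond} compares an $O(\log y)$ negative quantity times the variance with a positive quantity, and this comparison closes the argument.
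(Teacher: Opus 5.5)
Your proposal follows the paper's proof essentially step for step. The shared steps are:
the reduction via \eqref{e:Hess_q} and \eqref{e:del_yVar} to the scalar inequality \eqref{e:negCond};
Laplace localization of $\mu_y$ at the dominant left-corner critical point;
the estimate \eqref{e:LHS_cond} for the left side;
positivity of the covariance \eqref{e:COV} from right skew;
and the remark that $b_2<0$ goes through unchanged.
Your rescaling $x_1=x_1^*(y)\,t$ is a better justification of the covariance step than the paper's. A few local statements need correcting.

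First, $b_2=0$ does occur: $B_{11}=2$, $B_{12}=-1$, $B_{22}=1$ gives $b_1=1$, $b_2=0$. So your claim that this case cannot arise is false. It is harmless, though. Proposition~\ref{prop:corn_sol}(1) only uses $b_1>0$, and with $b_2=0$ the right corner gets no $\log y$ boost, so the $b_2<0$ argument applies verbatim.

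Second, your sentence on the cutoff at $x_1=0$ argues against itself. A width $x_1^*/\sqrt{B_{11}}$ puts the boundary only $\sqrt{B_{11}}$ standard deviations from $x_1^*$. Moreover, to leading order $w_y(x_1)=\tfrac12 B_{11}\big(\log(x_1/x_1^*)\big)^2+\textup{const}$, so every Taylor term $w_y^{(k)}(x_1^*)(x_1-x_1^*)^k$ has the same size on the bulk of $\mu_y$. The Gaussian-plus-cubic truncation (used in the paper too) is therefore not controlled.

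The rescaling settles this. In the limit $\log t\sim\mathcal{N}(1/B_{11},1/B_{11})$, and Gaussian integration by parts gives $\cov[\log t,(t-\EV t)^2]=\tfrac{2}{B_{11}}\var[t]>0$. After dividing out $(v_1-v_2u_1/u_2)^2$, the right side of \eqref{e:negCond} is $\sim\tfrac{2b_1}{B_{11}}\var_{\mu_y}[X_1]$. This is of order $(x_1^*)^2$, not $(x_1^*)^3$. The left side is $\big(-\tfrac{\det B}{B_{11}}\log y+O(1)\big)\var_{\mu_y}[X_1]$. Since only the left side carries $\log y$, the comparison is robust rather than delicate, and the sign of the covariance is not even needed.

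Finally, extending to $u_1=0$ or $u_2=0$ by continuity requires $\alpha_0(u)$ to be locally uniform, which the pointwise asymptotics do not give. It is easier to apply \eqref{e:Hess_q} directly there. At $u=(0,u_2)$, conditioning on $Y=y$ fixes $X_2$, and $f_Y(y)\var_y[X_1]\propto y^{2A_{12}/A_{22}-1}\exp\big(-(\log y-\log u_2-\mu_2)^2/(2A_{22})\big)$, which is decreasing for large $y$.
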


\begin{corollary}[Sub-additivity in the gains tail]\label{cor:gains_sa}
There is confidence level $\alpha_0\in(0,1)$ such that $q_{\alpha}$ is convex at $(1-\lambda)e_1 + \lambda e_2$ for all $\lambda\in[0,1]$ and all $\alpha_0\le\alpha<1$. Hence 
\[
 \VaR_{\alpha}[X_1 + X_2] \le  \VaR_{\alpha}[X_1] + \VaR_{\alpha}[X_2]  
\]
i.e., sub-additivity holds. 
\end{corollary}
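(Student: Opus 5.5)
We mirror the proof of Corollary~\ref{cor:losses_sa}, with concavity in the loss tail replaced by convexity in the gains tail from Theorem~\ref{thm:conv}. The first step is to extend convexity to the endpoints of the segment. The points $e_1$ and $e_2$ lie on the boundary of the admissible control region, where one coefficient vanishes. Theorem~\ref{thm:conv} is stated for $u_1\ge0$, $u_2\ge0$, so it formally covers these points. Alternatively, we argue by continuity: $q_\alpha$ is continuous in $u$ up to the boundary, and a pointwise limit of functions convex on the open segment is convex on the closed segment.

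The second step is to make the threshold $\alpha_0$ uniform along the segment $K=\{(1-\lambda)e_1+\lambda e_2:\lambda\in[0,1]\}$. Theorem~\ref{thm:conv} gives a threshold $\alpha_0(u)$ for each $u\in K$. In its proof, the threshold comes from a level $y_0$ beyond which the sign condition \eqref{e:negCond} fails in the right direction, i.e.\ beyond which $\partial_y(f_Y\var_y)\le0$.

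The asymptotic quantities entering that condition depend continuously on $u$:
\begin{itemize}
\item the corner critical point $x_1^*(y)$ from Proposition~\ref{prop:corn_sol}, which is a fixed point of a uniform contraction depending smoothly on $u$ through $\log u_2$;
\item the expansion \eqref{e:asym_log0};
\item the Laplace estimate \eqref{e:LP};
\item the skewness bound \eqref{e:w_y'''}.
\end{itemize}
The leading coefficient $-\det B/B_{11}$ in \eqref{e:LHS_cond} does not depend on $u$ at all. Hence $y_0(u)$ can be chosen locally bounded on the interior of $K$. Since $q_\alpha(u)\to\infty$ as $\alpha\to1$, locally uniformly in $u$, this gives a local $\alpha_0$ on open neighbourhoods. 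A finite subcover of the compact set $K$ then yields a single $\alpha_0<1$ valid on all of $K$.

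The third step derives subadditivity. Convexity along $K$ gives $q_\alpha(\tfrac12(e_1+e_2))\le \tfrac12 q_\alpha(e_1)+\tfrac12 q_\alpha(e_2)$. Positive homogeneity, $q_\alpha(cu)=cq_\alpha(u)$ for $c>0$, then gives
\[
q_\alpha(e_1+e_2)\le q_\alpha(e_1)+q_\alpha(e_2).
\]
With $\VaR_\alpha[X]$ identified with the $\alpha$-quantile in the gains tail, this is exactly $\VaR_{\alpha}[X_1+X_2]\le\VaR_\alpha[X_1]+\VaR_\alpha[X_2]$.

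The main obstacle is the uniformity near the endpoints $\lambda\in\{0,1\}$. There $u_2$ or $u_1$ tends to $0$, and the constants $c_1=(B\mu)_1-1+B_{12}\log u_2$ in \eqref{e:asym_log0}, as well as the contraction estimates, degenerate like $\log u_i$.

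To handle this, we first use scaling to rewrite everything for $u_1+u_2=1$. Near $u_2=0$ we rescale the parametrization of $S_1$ by $x_2\mapsto u_2x_2$, which absorbs $\log u_2$ into $\mu_2$ and keeps the contraction uniform. If that fails, the fallback is to prove convexity uniformly only on $\lambda\in[\epsilon,1-\epsilon]$. The endpoint comparison would then be handled directly: for $\lambda$ near $0$, $q_\alpha$ along $K$ is a small perturbation of the single log-normal quantile, which is explicit. We would combine this with continuity of $q_\alpha$ in $\lambda$ to conclude the midpoint inequality.
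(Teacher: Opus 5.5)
Your proposal follows the same route as the paper, whose proof is simply ``as in Corollary~\ref{cor:losses_sa}'': extend convexity to the endpoints $e_1,e_2$, use compactness of the segment to get a single $\alpha_0$, and apply convexity at $\lambda=1/2$ together with positive homogeneity, which you make explicit. Your extra discussion of uniformity near $\lambda\in\{0,1\}$ goes beyond the paper, which only asserts this uniformity via compactness; note, though, that the rescaling $x_2\mapsto u_2x_2$ merely turns $\log u_2$ into the shift $\mu_2+\log u_2\to-\infty$, so it does not by itself remove that endpoint degeneracy.
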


\begin{proof}
As in Corollary~\ref{cor:losses_sa}.
\end{proof}

\subsection{From dominance at one corner to dominance at the other end}\label{sec:b_2_le_0} 
The matrix $B$ has to be symmetric and positive definite, but now we assume that $b_1>-b_2>0$. From Proposition~\ref{prop:corn_sol} (Items 1 and 3) it follows that there is a unique maximizing critical point solution $x_y^*$ of \eqref{e:CPequ2} near $x=u_1^{-1}$ as $y\to 0$ and unique maximizing critical point solution $x_y^*$ of \eqref{e:CPequ2} near $x=0$ as $y\to\infty$. 

The above analysis for $y\to\infty$ carries through unchanged. For $y\to 0$, however, we get a negative covariance term, since in \eqref{e:COV} the leading contribution is now multiplied by $a'(x^*_1(y)) \sim b_2/x^*_1(y) < 0$, thus inducing a strict left skew. Condition~\eqref{e:negCond} is therefore not satisfied as $y\to\infty$, which implies Item 2 of Theorem~\ref{thm:conc}.

\section{Universality in the tail regime}\label{sec:univ}

Let $h(x,u) = \vv<u,x>$ and $X$ lognormal in $\R^{m+1}$ with density $\phi_X$ as in Section~\ref{sec:log_X}.
This section is concerned with the structure of $q_{\alpha}(u)$ in the tail regimes $\alpha\to0$ and $\alpha\to1$.   
 
\subsection{Asymptotics at the barycenter} 
Assume that $b_k>0$ for $k=1,\dots,m+1$.
As above, $S_1$ is an $m$-dimensional hypersurface in $\C\subset\R^{m+1}$ for fixed $u$. Consider $g_y$ defined in \eqref{e:def_gy}. 
We parametrize $S_1 \cong \{ x\in\R^m: x_i\ge0, \sum_{i=1}^m u_i x_i \le 1 \} = \mathcal{S}_1$ via $x_{m+1} = u_{m+1}^{-1}(1-\sum_{i=1}^m u_i x_i)$, and thus define $g_y^m(x_1,\dots,x_m) = g_y(x_1,\dots,x_m, x_{m+1})$. Critical points of $g_y^m$ are characterized by \eqref{e:CPequ_m}. By Proposition~\ref{prop:bc_sol}, as $\alpha\to0$, this equation allows exactly one solution, the barycentric solution $x_y^*$.

The Hessian of $g_y^m$ at $x_y^*$ is now given by $\del_{ij}^2 g_y^m(x_y^*) = -g_y^m(x_y^*)\del_{ij}^2 w_y^m(x_y^*)$ where $w_y^m$ is defined analogously using the parametrization of $S_1$. The dominant term of the evaluated Hessian $-D^2w_y^m(x_y^*)(v,v)$ is
\begin{equation}
 \log(y)\left(
  \Big\langle
   \begin{pmatrix} 
   b_1/(x_y^*)_1^2 & &  \\ 
   & \ddots  & \\ 
    & & b_m/(x_y^*)_m^2 
   \end{pmatrix} v , v
  \Big\rangle 
  + 
  \frac{b_{m+1}}{u_{m+1}^2x_{m+1}^2} 
  \Big\langle 
  \begin{pmatrix} 
   u_1   \\ 
   \vdots  \\  
    u_m 
   \end{pmatrix}
   , v
   \Big\rangle^2
 \right)
 =: \log(y)\Big\langle \eta v,v\Big\rangle 
\end{equation}
whence $-D^2w_y^m(x_y^*)(v,v)\to -\infty$ as $y\to 0$. In particular, $x_y^*$ is the unique and strict global maximum of $g_y^m$ in the small $y$ regime. Notice that the matrix $\eta$ is defined independently of $y$. 

For all $x\neq x_y^*$ we have $w_y^m(x)-w_y^m(x_y^*) = \log(y)(a(x)-a(x_y^*))+\phi_X(x)-\phi_X(x_y^*)\to \infty$ as $y\to0$. 
Outside of a given neighborhood of 
\begin{equation}
 \lim_{y\to0} x_y^* = x^* = \Big(\frac{b_i}{u_i\beta}\Big)_{i=1}^m
\end{equation}
this implies pointwise (and thus uniform by compactness) convergence
\begin{equation}
 \Big|
  g_y^m(x)\exp\Big(w_y^m(x_y^*)\Big) 
  - \exp\Big(
     \frac{1}{2}\log(y)\sum_{i,j=1}^m\eta_{i,j}(x_i - (x_y^*)_i)(x_j - (x_y^*)_j)
  \Big)
 \Big| \to 0
\end{equation}
for $x = (x_1,\dots,x_m)$, as $y\to 0$. Therefore, as $y\to0$ 
\begin{align}
 \EV_{\mu_y}\Big[\xi\Big]
 &=
 Z_y^{-1}\int_{\mathcal{S}_1}\xi(x)g_y^m(x)\,dx \\ 
 \notag
 &\sim 
 N_y^{-1}\int_{\mathcal{S}_1}\xi(x) \exp\Big(
     \frac{1}{2}\log(y)\sum_{i,j=1}^m\eta_{i,j}(x_i - (x_y^*)_i)(x_j - (x_y^*)_j)
  \Big)\,dx 
\end{align}
for test functions $\xi$ and where $N_y^{-1}\sim \frac{(-\log(y))^{m/2}\det(\eta)^{1/2}}{(2\pi)^{m/2}}$ denotes the normalization of the normal distribution $N(x_y^*, -\log(y)^{-1}\eta^{-1})$ restricted to   $\mathcal{S}_1$. 
In particular, for the $i$-th coordinate $X_i = \vv<e_i, X>$, $i=1,\dots,m$, this implies
\begin{equation}
\label{e:E[X_i]}
 \lim_{y\to0}\EV_{\mu_y}[X_i] = \frac{b_i}{u_i \beta} .
\end{equation}

\subsection{Universality at the center solution}
Now we constrain $u$, which has to satisfy $u_i>0$, to the hyperplane  $\mathcal{U} := \{u_i>0, \sum_{i=1}^{m+1} u_i = 1\}$. Otherwise there would be ambiguous mixing between the scales of $X$ and $u$.

Theorem~\ref{thm:conc} implies that for every compact subset of $\mathcal{U}$ there is an $\alpha_0>0$ such that $q_{\alpha}$ is strictly concave for all $\alpha\le\alpha_0$ on this subset. For the rest of this section we assume that $\alpha\le\alpha_0$ holds for a fixed (large) subset.  Thus there exists a unique global and strict maximizer $u_{\alpha}^*$ of $q_{\alpha}$ for each $\alpha$. 

For given (small) $\eps>0$, we parametrize $u = (u_1, \dots, u_m, u_{m+1}) = (t_1, \dots, t_m, 1-\sum_i t_i) =: \gamma(t)$ with $t\in\mathcal{U}_m^{\eps} := \{t\in\R^m: t_i\ge\eps, \sum_i t_i\le 1-\eps\}$. We are interested in the shape of
\begin{equation}
 \Sigma_{\alpha}^{\eps} 
 := \Big\{
  \Big(t, q_{\alpha}(\gamma(t))/q_{\alpha}(u_{\alpha}^*)\Big): t\in\mathcal{U}_m^{\eps}
 \Big\}
 \subset \mathcal{L}^{\eps}
\end{equation}
where $\mathcal{L}^{\eps} := \mathcal{U}_m^{\eps}\times\R_{>0}$. We view $\Sigma_{\alpha}^{\eps}$ as a submanifold of $\mathcal{L}^{\eps}$, more precisely it is a section of the line bundle $\pi: \mathcal{L}^{\eps}\to\mathcal{U}_m^{\eps}$, $\pi(t,q) = t$. For given $t\in\mathcal{U}_m^{\eps}$ the parametrization $\gamma$ can be used to determine the tangent space to $\Sigma_{\alpha}^{\eps}$ at $q_{\alpha}(\gamma(t))$, namely we use the kinematic representation of tangent vectors. That is, 
\begin{align}
\label{e:kin_VF}
 \frac{\delta(q_{\alpha}\circ\gamma)}{\delta t_i}
 &= 
 \Big\langle  
  \frac{\delta q_{\alpha}}{\delta u}(q_{\alpha}(\gamma(t))), \frac{\delta\gamma}{\delta t_i}
 \Big\rangle  
 = 
 \EV_{q_{\alpha}(\gamma(t))}\Big[ X_i \Big]
 -  \EV_{q_{\alpha}(\gamma(t))}\Big[ X_{m+1} \Big] 
 =
 q_{\alpha}(\gamma(t))\,\EV_{\mu_{q_{\alpha}(\gamma(t))}}\Big[ X_i - X_{m+1} \Big] \\
 \notag 
 &= 
 q_{\alpha}(\gamma(t)) Z_{q_{\alpha}(\gamma(t))}^{-1}
 \int_{\mathcal{S}_1}\Big( x_i - u_{m+1}^{-1}(1 - \sum_{j=1}^m u_j x_j)\Big) g_{q_{\alpha}(\gamma(t))}^m(x)\,dx \\
 \notag 
 &\sim 
 q_{\alpha}(\gamma(t)) 
 N_{q_{\alpha}(\gamma(t))}^{-1}\int_{\mathcal{S}_1}\Big( x_i - \gamma_{m+1}(t)^{-1}(1 - \sum_{j=1}^m t_j x_j)\Big)  \exp\Big(
     \frac{1}{2}\log(y)\sum_{i,j=1}^m\eta_{i,j}(x_i - (x_y^*)_i)(x_j - (x_y^*)_j)
  \Big)\,dx  
 \\ 
 \notag 
 &\sim
 q_{\alpha}(\gamma(t))\Big(
  \frac{b_i}{t_i\beta} - \frac{b_{m+1}}{\gamma_{m+1}(t) \beta}
  + R_{\alpha}(t)
  \Big)
\end{align}
where $\lim_{\alpha\to0}R_{\alpha}(t) = 0$ uniformly in $t$ by \eqref{e:E[X_i]}. Taking $\alpha\to0$ thus defines a family, $\mathcal{D}^{\eps} = \{\xi_1,\dots,\xi_m\}$,  of vector fields $\xi_i: \mathcal{L}^{\eps}\to\R^{m+1}$, 
\begin{equation}
\label{e:univ_VF}
 \xi_i(t,q) =
 \Big( e_i, \xi_i^v(t, q) \Big),
 \qquad 
 \xi_i^v(t, q) 
 = 
 q
 \Big(
  \frac{b_i}{t_i\beta} - \frac{b_{m+1}}{\gamma_{m+1}(t) \beta}
  \Big)
\end{equation}
where $e_i$ is the $i$-th standard basis vector. 
Let 
\begin{equation}
\label{e:Sigma_0}
 \Sigma_0^{\eps}
 = \Big\{
    \Big(t, \lim_{\alpha\to0}\frac{q_{\alpha}(\gamma(t))}{q_{\alpha}(u_{\alpha}^*)} \Big): t\in\mathcal{U}_m^{\eps}
  \Big\}
\end{equation}
which is well-defined as a subset of $\mathcal{L}_m^{\eps}$. The maximizer $u_{\alpha}^* = \gamma(t_{\alpha}^*)$ is characterized as the point where the kinematic tangent vector \eqref{e:kin_VF} vanishes. As $\alpha\to0$ this is thus the (barycentric) point 
\begin{equation}
\label{e:univ_bc}
 \lim_{\alpha\to0} t_{\alpha}^*
 = t^* = \Big(\frac{b_i}{\beta}\Big)_{1=1}^m
\end{equation}
and we note that $u^*_{m+1} = 1-\sum_{i=1}^m b_i/\beta = b_{m+1}/\beta$. 

The system $\mathcal{D}^{\eps}$ consists of pairwise commuting vector fields, i.e.\ the Lie bracket of two such fields satisfies $[\xi_i, \xi_j] = 0$. Thus the system is integrable in the sense of Frobenius. In our case, the leaf $\tilde{\Sigma}_0^{\eps} = \{(t, q_0(t) ): t\in\mathcal{U}_m^{\eps}\} $ passing through the maximizer with normalized maximum, i.e.\ through $(b_1/\beta, \dots, b_m/\beta, 1)$, can be explicitly parametrized as
\begin{equation}
\label{e:univ}
 q_0(t_1,\dots, t_m)
 = 
  \exp\Big(
    \frac{1}{\beta}\Big(
      \sum_{i=1}^m b_i\log\Big(\frac{\beta t_i}{b_i}\Big)
      + b_{m+1}\log\Big(\frac{\beta\gamma_{m+1}(t)}{b_{m+1}}\Big)
    \Big)
  \Big)
\end{equation}
By smooth dependence on intitial conditions it follows that $\tilde{\Sigma}_0^{\eps} = \Sigma_0^{\eps}$ which provides the latter with the structure of a smooth submanifold. 
See Figure~\ref{fig:univ_1D} for the case $m=1$. 

Let $k_{\alpha} = q_{\alpha}(u_{\alpha}^*)$ where  $u_{\alpha}^*$ is the strict global maximizer at level $\alpha$. 
As $\alpha\to0$ definition~\eqref{e:Sigma_0} now implies that 
\begin{equation}
 \label{e:univ_prod}
 q_{\alpha}(u) \sim  k_{\alpha} q_0(u)
\end{equation}
where $u=\gamma(t)$. 

\begin{remark}
Equation~\eqref{e:univ_prod} separates scale and shape. 
The shape function $q_0$ is universal in the sense that it depends only on the covariance structure of the Gaussian log-factors, but not on the position in the (deep) tail and it forgets the mean value, $\mu$, of the log-factors. However, the speed of convergence towards the scale separating regime does depend on $\mu$.  
\end{remark}

\begin{figure}[htbp] 
\centering 
\includegraphics[width=0.45\textwidth]{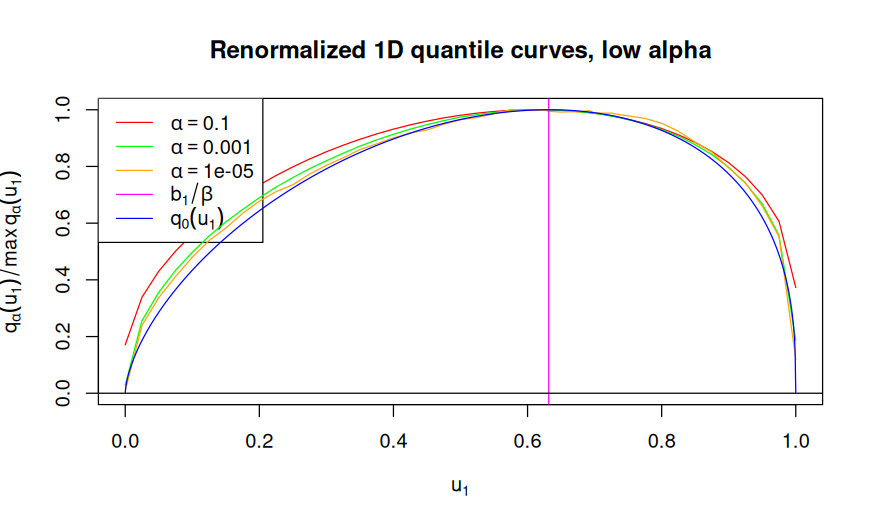}
\qquad
\includegraphics[width=0.45\textwidth]{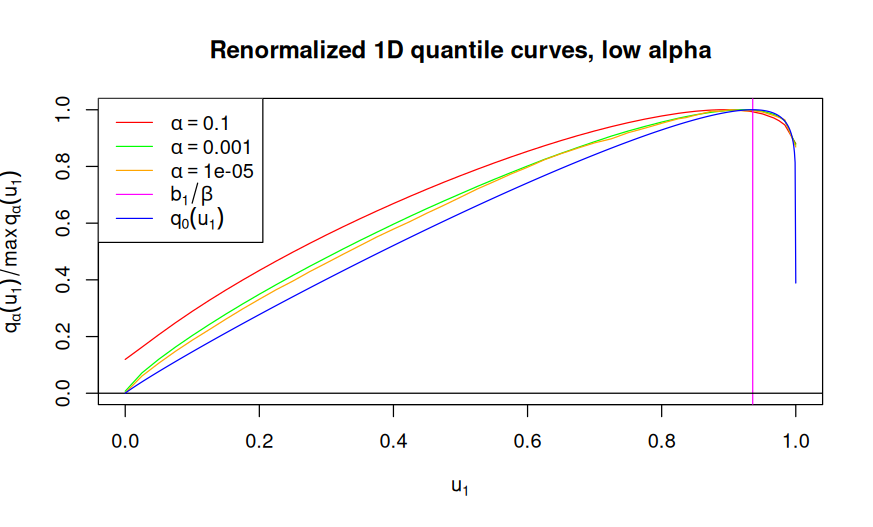}
\vspace{-3mm}
\caption{
The case $m=1$. 
$\mu = (0, 0)$, $B_{11} = 0.5$ (left panel),  $B_{11} = 5$ (right panel), $B_{12} = 0.1$, $B_{22} = 0.25$. The vertical line is drawn at $b_1/\beta$. 
The blue curve is the graph of \eqref{e:univ}. 
Quantiles were generated with $10^7$ trials. 
}
\label{fig:univ_1D} 
\end{figure}

\subsection{Universality at the corner solution ($b_2<0$, $\alpha\to0$)} 
Let $m=2$ and $b_2<0$. With the same notation as above and using the asymptotic $\delta$-concentration at \eqref{e:asym_RC} of $\mu_{q_{\alpha}(t_1)}$ as $\alpha\to0$, we find that 
\begin{equation}
 \lim_{\alpha\to0} \Big(
  q_{\alpha}(\gamma(t_1))
 \Big)^{-1}\frac{\delta(q_{\alpha}\circ\gamma)}{\delta t_1}
 = \frac{t_1^{-1} - 1}{\gamma_2(t_1)}
 = \frac{1}{t_1}
\end{equation}
The asymptotically universal curve $q_0 = q_0(t_1)$ thus satisfies $q_0'(t_1) = q_0(t_1)/t_1$, whence $q_0(t_1) = k t_1$ with $k\ge0$. Again we obtain a one-parameter family of (now linear) shape curves
\begin{equation}\label{e:univ_lin_0}
q_{\alpha}(t_1) \sim k_{\alpha} \cdot t_1
\end{equation}
with $k_{\alpha} = q_{\alpha}(1,0)$.

\subsection{Universality at the corner solution ($b_1>b_2$, $\alpha\to 1$)} 
Let $m=2$ and $b_1 > b_2$. If $b_2>0$ we assume that the global maximum condition \eqref{e:GMC} holds. With the same notation as above and using the asymptotic $\delta$-concentration at \eqref{e:asym_log0} of $\mu_{q_{\alpha}(t_1)}$ as $\alpha\to1$, we find that 
\begin{equation}\label{e:univ_lin_1}
 \lim_{\alpha\to1} \Big(
  q_{\alpha}(\gamma(t_1))
 \Big)^{-1}\frac{\delta(q_{\alpha}\circ\gamma)}{\delta t_1}
 = \frac{ - 1}{\gamma_2(t_1)}
 = -\frac{1}{1-t_1}
\end{equation}
The asymptotically universal curve $q_0 = q_0(t_1)$ thus satisfies $q_0'(t_1) = -q_0(t_1)/(1-t_1)$, whence $q_0(t_1) = k (1-t_1)$ with $k\ge0$. Again we obtain a one-parameter family of (now linear) shape curves
\begin{equation}
 q_{\alpha}(t_1) \sim k_{\alpha}\cdot (1 - t_1)
\end{equation}
with $k_{\alpha} = q_{\alpha}(0,1)$.
See Figure~\ref{fig:univ_corner} for an illustration of the two corner cases.

\begin{figure}[htbp] 
\centering 
\includegraphics[width=0.45\textwidth]{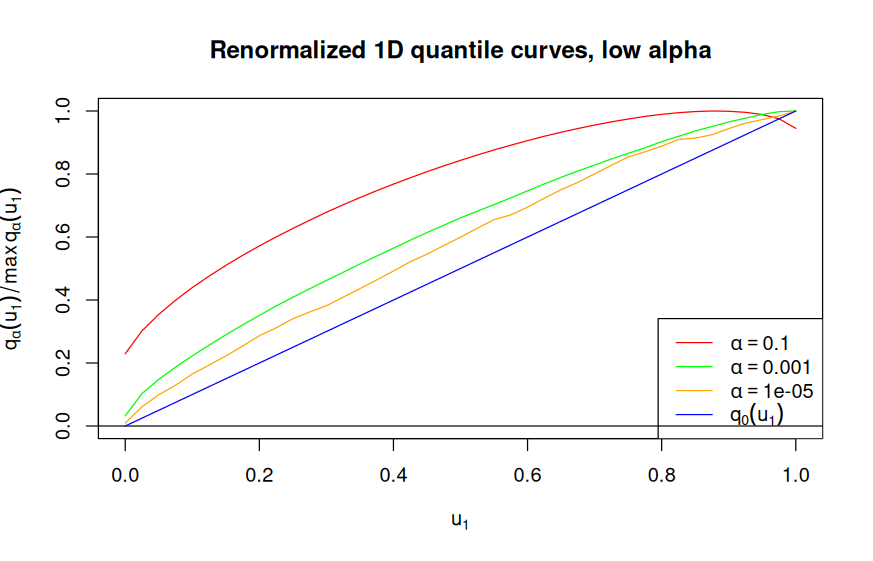}
\qquad
\includegraphics[width=0.45\textwidth]{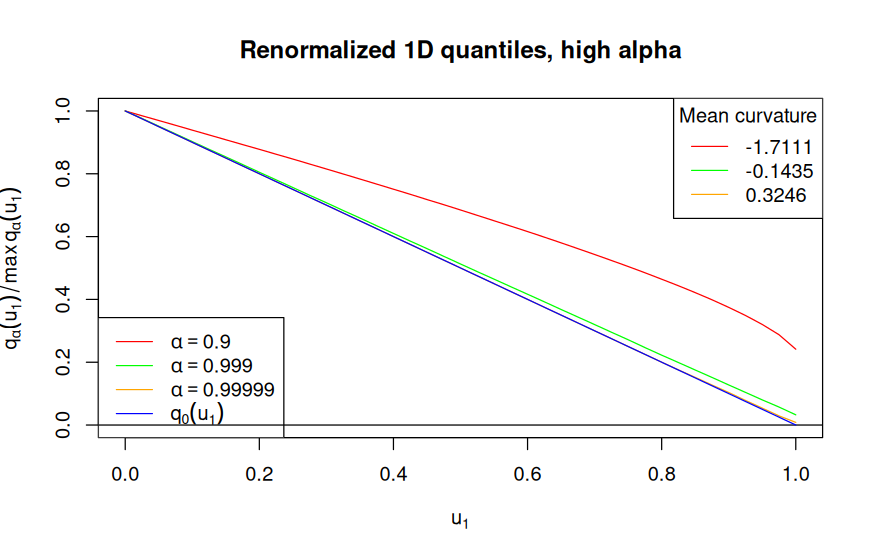}
\vspace{-3mm}
\caption{
The case $m=1$ at the corners for the left and the right tail. 
$\mu = (0, 0)$, $B_{11} = 0.5$, $B_{12} = -0.3$, $B_{22} = 0.25$. The blue lines are the line segments \eqref{e:univ_lin_0} and \eqref{e:univ_lin_1}. In this example the right tail develops convexity only at the last simulated instance (orange curve), as indicated by the mean curvature observations. 
Quantiles were generated with $10^7$ trials. 
}
\label{fig:univ_corner} 
\end{figure}

\section{Optimization of Value-at-Risk}\label{sec:OptVaR}

For a linear portfolio, with pay-off $h(u,X) = u_1 X_1 + u_2 X_2$ and lognormal risk factors, the deep tail quantile optimizer can now be read off from the universal shape in Section~\ref{sec:univ}. 
The case $\alpha\to0$, representing the loss tail of $h(u,X)$, is the case which is interesting for the practitioner. 

Let the density of $(X_1, X_2)$ be given by \eqref{e:log_dens}. Since the deep tail shape curves are independent of the location parameter, we do not need to specify $\mu$. Let $A$ denote the dispersion matrix and assume $\det A = 1$.  

The control $u$ represents investment and  we apply the constraints $u_1+u_2 = 1$ and $u_1\ge0$, $u_2\ge0$.  

For the loss tail, $\alpha\to0$, there are two possible cases. The first case is given by $b_1 = A_{22} - A_{12} > 0$ and $b_2 = A_{11} - A_{12} > 0$. This is the case where there exists relevant diversification between the log-factors, that is
\begin{equation}\label{e:diversif}
 \frac{A_{12}}{\sqrt{A_{11}A_{22}}}
 < 
 \frac{\sqrt{A_{ii}}}{\sqrt{A_{11}+A_{22}-A_{ii}}}
\end{equation} 
holds for $i=1$ and $i=2$. 
According to \eqref{e:univ_prod}, the loss minimization problem 
\begin{equation}\label{e:opt2D}
 q_{\alpha}(u) \to MAX
\end{equation}
subject to $u_1+u_2=1$ is solved asymptotically as $\alpha\to0$ by the investment strategy
\begin{equation}
\label{e:weights}
 u_i = \frac{A_{11} + A_{22} - A_{ii} - A_{12}}{A_{11} + A_{22} - 2A_{12}} 
\end{equation}
for $i=1,2$. 
 
\begin{remark}
Formula~\eqref{e:weights} is the minimal variance solution for the portfolio of $\log$-prices, that is it is the solution of $\var[u_1\log X_1 + u_2\log X_2]\to MIN$ with the same constraints on $u_i$. This formula attaches a higher weight to the less risky investment while keeping also a portion of the riskier position for diversification.  
\end{remark}

Let us now consider the case $\alpha\to0$, $b_1 = A_{22} - A_{12} > 0$ and $b_2 = A_{11} - A_{12} < 0$. This means that there is significant positive correlation between the assets, that is the inequality \eqref{e:diversif} is reversed for $i=1$. In this case the risk minimization problem \eqref{e:opt2D} is solved according to \eqref{e:univ_lin_0} by the strategy 
\begin{equation}
\label{e:corr_opt}
 u = (1, 0)
\end{equation}
i.e., total investment in the less risky position.  
 
Concerning the case $\alpha\to 1$, we may also ask which strategy would maximize profit at a given quantile level. 
Assume the ordering is such that $b_1>b_2$ and if $b_2>0$ we assume additionally that the global maximum condition \eqref{e:GMC} holds. Then \eqref{e:univ_lin_1} implies that the optimizer is found at 
\begin{equation}\label{e:opt_alpha_1}
 u = (0,1)
\end{equation}
i.e., full investment in the more volatile position. 

\begin{remark}
The solutions \eqref{e:weights}, \eqref{e:corr_opt} and \eqref{e:opt_alpha_1} to the respective versions of the optimization problem~\eqref{e:opt2D} correspond to intuition and are not surprising per se. It is, however, remarkable that these strategies are obtained analytically from a quantile perspective.  
\end{remark}

\section*{Appendix: some classical notions in hypersurface geometry}
We follow the presentation in \cite{GallotHulinLafontaine2004}. 
Consider an $m$-dimensional smooth and orientable hypersurface $S\subset \R^{m+1}$. Due to orientability there exists a globally defined normal field $\nu: S\to\R^{m+1}$ with $|\nu_x|=1$ and $\vv<\nu_x, T_xS> = 0$. The \emph{shape operator} $P: TS\to TS$ is defined by 
\begin{equation}
 P(u) = D_u\nu = D\nu\cdot u = \sum u^i\del_i\nu^j e_j
\end{equation}
where $D$ denotes, both, the Euclidean covariant derivative and the differential, and we use Euclidean coordinates in the sum. Since $0 = D_u\vv<\nu,\nu>/2 = \vv<P(u),\nu>$, $P$ does take values in $TS$. Moreover, $\vv<P(u),v> - \vv<P(v),u> = \vv<[u,v],\nu> = 0$, where $[.,.]$ is the Lie bracket of vector fields, whence $P$ is symmetric. 

We mention that the shape operator is related to the second fundamental form $II(u,v) = D_u v - D^S_u v$ via the Weingarten equation $\vv<P(u),v> = -\vv<II(u,v),\nu>$; here $D^S$ denotes covariant differentiation with respect to the induced Riemannian metric in the submanifold $S$. 

The \emph{mean curvature} of $S$ is defined as the trace of the shape operator, normalized by the dimension, that is 
\begin{equation}
\label{e:mean_curv}
 \kappa 
 = \frac{1}{m}\,\textup{tr}\,(P)
 = \frac{1}{m}\,\textup{div}\,\nu
\end{equation}
where $\textup{div}\,\nu$ is the Euclidean divergence. 

We are interested in the case where $S$ is given as the level set of an equation at a regular value. To this end, let $h: \R^{m+1}\to\R$ be a smooth function and fix $y$ such that $\nabla h(x)\neq 0$ for $x\in S = h^{-1}(y)$. The normal field is now given by $\nu = \nabla h/|\nabla h|$. 

From \cite[Theorem~5.20]{GallotHulinLafontaine2004} it follows that 
\begin{equation}
 \frac{\del}{\del\eps}\Big|_{\eps=0}
 \int_{h^{-1}(y+\eps)}\phi(x)\,d\sigma_{y+\eps}
 = 
 \int_S T[\phi](x)\,d\sigma_y
\end{equation}
where $d\sigma_y$ is the surface element on $S$ and the operator $T$ is defined by
\begin{equation}
\label{e:surf_var}
T\phi = \frac{1}{|\nabla h|} 
\,\textup{div}\Big( 
 \phi \nu 
 \Big) 
 = 
 \frac{1}{|\nabla h|}\Big( 
   \vv<\nu,\nabla\phi> 
   + m\kappa\phi 
 \Big) .
\end{equation}
Here we use that $h: \R^{m+1}\to\R$ is a Riemannian submersion (in a vicinity of $y$) and that the infinitesimal approximation of the horizontal lift induced by the Euclidean metric of $y\mapsto y+\eps$ is given by  $x\mapsto x + \eps \nu/|\nabla h|$.


\printbibliography[title={References}]
 
\end{document}